\renewcommand{\leq}{\leqslant}
\renewcommand{\geq}{\geqslant}
\newcommand{%
    
    \import{./fig/}{.pdf_tex}
}[1]{%
    
    \import{./fig/}{#1.pdf_tex}
}
\definecolor{CustomBlue}{RGB}{23, 86, 118}
\newcommand{\sse}{\subseteq}
\newcommand{\R}{\mathbb{R}}
\newcommand{\N}{\mathbb{N}}
\newcommand{\Z}{\mathbb{Z}}
\newcommand{\eps}{\varepsilon}
\newcommand{\lam}{\lambda}
\newcommand{\Om}{\Omega}
\newcommand{\sig}{\sigma}
\newcommand{\g}{\gamma}
\newcommand{\del}{\delta}
\newcommand{\al}{\alpha}
\newcommand{\be}{\beta}
\newcommand{\tr}{\mathrm{tr}}
\newcommand{\Nor}[1]{\mathcal{N}\left(#1\right)}
\newcommand{\Exp}[1]{\exp\left(#1\right)}
\newcommand{\Abs}[1]{\left|#1\right|}
\renewcommand{\Box}[1]{\left[#1\right]}
\newcommand{\Rnd}[1]{\left(#1\right)}
\newcommand{\Bra}[1]{\left\{#1\right\}}
\renewcommand{\bar}[1]{\overline{#1}}
\newcommand{\E}[1][\@nil]{%
	\def\tmp{#1}%
	\ifx\tmp\@nnil
		\mathbb{E}
	\else
		\mathbb{E}\left[#1\right]
\fi}
\renewcommand{\P}[1][\@nil]{%
	\def\tmp{#1}%
	\ifx\tmp\@nnil
		\mathbb{P}
	\else
		\mathbb{P}\Rnd{#1}
\fi}
\newcommand{\Cov}{\mathrm{Cov}}
\newcommand{\Var}{\mathrm{Var}}
\newcommand{\norm}[1]{\left\lVert#1\right\rVert}
\newcommand{\mnorm}[1]{{\left\vert\kern-0.25ex\left\vert\kern-0.25ex\left\vert #1 
\right\vert\kern-0.25ex\right\vert\kern-0.25ex\right\vert}}
\newcommand{\1}{\mathbf{1}}
\renewcommand{\S}{\mathcal{S}}
\newcommand{\cP}{\mathcal{P}}
\newcommand{\f}[2]{\frac{#1}{#2}}
\newcommand{\F}[2]{{\left(\frac{#1}{#2}\right)}}
\newcommand{\df}{\coloneq}
\newcommand{\kl}[2]{d_{\mathrm{KL}}\Rnd{#1 || #2}}
\newcommand{\poly}{\mathrm{poly}}
\newcommand{\dt}{\mathrm{d}t}
\newcommand{\les}{\prec}
\newtheorem{theorem}{Theorem}[section]
\newtheorem{lemma}[theorem]{Lemma}
\theoremstyle{definition}
\newtheorem{remark}[theorem]{Remark}
\numberwithin{theorem}{section}
\numberwithin{equation}{section}
\numberwithin{figure}{section}
\newcommand{\cK}{\ensuremath{\mathcal K}}
\newcommand{\cN}{\ensuremath{\mathcal N}}
\newcommand{\cS}{\ensuremath{\mathcal S}} 
\newcommand{\cT}{\ensuremath{\mathcal T}}
\newcommand{\bC}{\mathbb{C}}
\def\({\left(}
\def\){\right)}
\def\d{\mathrm{d}}
\def\bul{$\bullet$\hskip2mm }
\def\i{\iota}
\newcommand{\TV}[1]{\left\| #1 \right\|_{\mathrm{TV}}}
\def\ss{{s_\star}}
\def\G{\Gamma}
\def\lm{\lam_{\mathrm{min}}}
\def\les{\lesssim}
\newcommand{\Inn}[1]{\left\langle #1 \right\rangle}
\renewcommand{\Re}[1]{\mathrm{Re}\Rnd{#1}}
\let\temp\phi
\let\phi\varphi
\let\varphi\temp
\def\go{g^{(1)}}
\def\gt{g^{(2)}}
\def\ao{a^{(1)}}
\def\at{a^{(2)}}
\def\muan{\mu_{n, \g, a}}
\def\mua{\mu_{\g, a}}
\def\mugn{\mu_{n, \g, g}}
\def\mug{\mu_{\g, g}}
\def\mutan{\widetilde{\mu}_{n, \g, a}}
\def\muta{\widetilde{\mu}_{\g, a}}
\def\mutg{\widetilde{\mu}_{\g, g}}
\def\tS{\widetilde{S}}
\def\tZ{\widetilde{Z}}
\def\cKn{\cK_{n, \g}}
\def\cKa{\cK_{n, \g, a}}
\def\cKg{\cK_{n, \g, g}}
\begin{document}
\title[Invariance principle for GMC]{Invariance principle for the Gaussian multiplicative chaos via a high dimensional CLT with low rank increments}

 \author{Mriganka Basu Roy Chowdhury, Shirshendu Ganguly }
 \address{Mriganka Basu Roy Chowdhury\\ University of California, Berkeley}
 \email{mriganka\_brc@berkeley.edu}
 \address{Shirshendu Ganguly\\ University of California, Berkeley}
 \email{sganguly@berkeley.edu}

\begin{abstract} Gaussian multiplicative chaos (GMC) is a canonical random
	fractal measure obtained by exponentiating log-correlated Gaussian
	processes, first constructed in the seminal work of Kahane \cite{kahane1985chaos}. Since
	then it has served as an important building block in constructions of
	quantum field theories and Liouville quantum gravity. However, in many
	natural settings, non-Gaussian log-correlated processes arise. In this paper,
	we investigate the universality of GMC through an invariance principle. We consider the
	model of a random Fourier series, a process known to be log-correlated.
	While the Gaussian Fourier series has been a classical object of study,
	recently, the non-Gaussian counterpart was investigated and the associated
	multiplicative chaos constructed  in \cite{junnila2020multiplicative}. We
	show that the Gaussian and non-Gaussian variables can be coupled so that
	the associated chaos measures are almost surely mutually absolutely
	continuous \textit{throughout the entire
	sub-critical regime}. This solves the main open problem from \cite{kk} who had earlier established such a result for a part of the regime. The main ingredient is a new high dimensional CLT for
	a sum of independent (but not i.i.d.) random vectors belonging to rank one subspaces with error bounds involving the isotropic properties of the
	covariance matrix of the sum, which we expect will find other applications. The proof
	relies on a path-wise analysis of Skorokhod embeddings as well as a
	perturbative result about square roots of positive semi-definite matrices  which, surprisingly, appears to be new. 
\end{abstract}
\maketitle{}

\begin{figure}[h]
	\centering
	\includegraphics[width=0.87\textwidth]{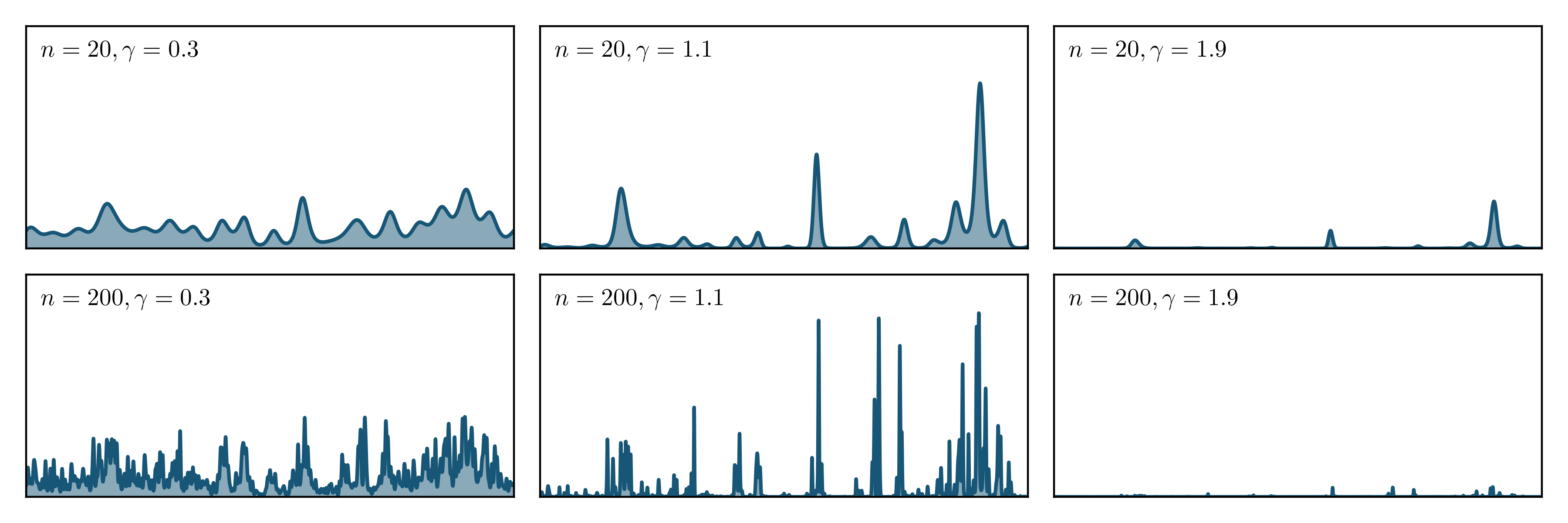}
	\captionsetup{singlelinecheck=off}
	\caption*{
		\small
		{\scshape Figure.}
		Plots of the (prelimiting) GMC density associated with the random Fourier series
		$$
			[0, 1] \ni t \mapsto \sum_{k = 1}^n k^{-1/2} \Rnd{g_k^{(1)} \cos(2\pi kt) + g_k^{(2)}\sin(2\pi kt)}, \quad g_k^{(1)}, g_k^{(2)} \sim \cN(0, 1).
		$$
		The six plots show the densities for varying values of the number of terms $n$ (varied along
		columns), and intermittency parameter $\g$ (varied along rows). The
		density is proportional to the \emph{exponential}
		of ($\g$ times) the Fourier series, properly normalized. The GMC for a fixed $\g$ is
		the large-$n$ limit of the measures with these densities. Note that the measure
		is much rougher and the support much smaller for larger values of $\g < \sqrt{2}$.
		For $\g \geq \sqrt{2}$, the support vanishes in the $n \to \infty$ limit, and thus the GMC
		is trivial, as illustrated in the plots corresponding to $\g = 1.9$ (last column).
	}
	\label{fig:gmc}
\end{figure}
\newpage

\setcounter{tocdepth}{2}
\tableofcontents

\parindent=0pt
\parskip=5pt

\section{Introduction and main results}

The paper deals with two seemingly different directions of modern probability and uses one to study the other.
The first is theory of Gaussian multiplicative chaos.

\subsection{Gaussian multiplicative chaos (GMC)}
This was introduced by Kahane in \cite{kahane1985chaos} as a mathematical model for energy dissipation in turbulence,
making rigorous a program initiated by Mandelbrot in \cite{mandelbrot2005possible}. 
More precisely, Kahane gave a
rigorous interpretation of the measure
$$
M_\gamma(dx) := e^{\gamma X(x)-
\frac{\gamma^2}{2} \Var[X(x)]}\sigma(\d x)
$$ 
where $(X(x))_{x\in T}$ denotes a log-correlated Gaussian field indexed by an arbitrary metric space $T$ equipped with a
measure $\sigma(\d x)$.
Here log-correlated indicates that the covariance $R(x,y)$ decays logarithmically in the distance $d(x,y)$.

Log-correlated Gaussian fields (LCGF) are expected to occur as the fluctuation theory in many natural critical
statistical physics models with the two-dimensional Gaussian free field being perhaps the most canonical such example
\cite{bp}.  The analysis of extreme values of log-correlated fields has been an important topic of study (see e.g., \cite{bdzgff,z, biskup}).
GMCs, the fractal random measures obtained by exponentiating such LCGFs, have turned out to be central building blocks of
quantum field theories and Liouville quantum gravity, beyond being of independent interest (see
\cite{rhodes2014gaussian} for a beautiful survey on this topic).  A particular feature of such measures, which will also be
crucially used in our work as well, is that they are supported on the set of \emph{thick points} of the underlying field, i.e., points whose
values are close to that of the maximum.

On the other hand, there are various examples of non-Gaussian log-correlated
fields producing a limiting \emph{Gaussian} multiplicative chaos such as those arising in random matrix theory
as log-characteristic polynomials of unitary matrix ensembles
\cite{webb2015characteristic}. Models with a number-theoretic flavor, expected
to behave similarly, consider the restriction of the Riemann-zeta function on a
random interval on the critical line \cite{saksman2020riemann}.

Only very recently, a general approach to studying non-Gaussian log-correlated fields and associated multiplicative
chaos measures have been initiated in \cite{junnila2020multiplicative} where the author considered a class of models with a
martingale structure and developed a general result under certain moment assumptions to establish the existence and
convergence to the corresponding multiplicative chaos measures. A particular example being the random Fourier series
model whose definition appears shortly. While this is the model of consideration in this article, other interesting
examples include the Brownian multiplicative chaos obtained by formally exponentiating the square-root of the
occupation field of 2D Brownian motion (see e.g. \cite{jegogmc,jegobmc, jegopbmc,abjl}). We now arrive at the central object of interest in this article.  For a
sequence of i.i.d. random variables $a=\big\{\ao_k, \at_k \big\}_{k\ge 1}$ let 
	\[
		S_{n, a}(t) \df \sum_{k = 1}^n \f1{\sqrt{k}} \Rnd{\ao_k \cos(2\pi kt) + \at_k \sin(2\pi k t)}, \quad t \in [0, 1].
	\]
	When the disorder variables are i.i.d. standard Gaussians, we denote the
	analogous object $S_{n, g}(t).$ Note that the variance of $S_{n, a}(t)$
	grows like $\log n$ and hence this sequence of functions do not converge in
	the sense of functions. Nonetheless, there is a limit $S_{\infty, a}(t)$ in
	the space of tempered distributions, which, while non-Gaussian, is also
	log-correlated. In \cite{junnila2020multiplicative}, using martingale techniques, Junnila also
	constructed the corresponding multiplicative chaos and exhibited its
	non-triviality for all $\gamma \in (0,\sqrt 2)$ analogous to the Gaussian
	case (we will indicate a heuristic reason explaining why the measure
	degenerates for larger values of $\gamma$ in Section \ref{iop}). Subsequently, in \cite{kk}, the
	question of an invariance principle in this context was investigated.
	Namely, whether the multiplicative chaos corresponding to $S_{\infty,
	a}(t)$ is related to that corresponding to $S_{\infty, g}(t).$  Note that
	they cannot be exactly the same since the $k=1$ term in the sum has a
	global multiplicative non-Gaussian effect. However, in \cite{kk}, the authors
	showed that the disorder variables $a$ and $g$ can be coupled so that the
	resulting chaos measures are absolutely continuous with respect to each
	other as long as $\gamma \in (1,\sqrt 2).$ It turns out that this is a
	natural threshold for this problem since the associated multiplicative
	martingale given by the measures  $e^{\gamma S_{n, a}(t)-
	\frac{\gamma^2}{2} \Var[S_{n, a}(t)]}\d t$ converges in $L^2$ for $\gamma
	<1$ and only in $L^1$ for $\gamma \in [1,\sqrt 2).$ While traditionally,
	the $L^2$ regime has been easier to analyze considering one can make
	arguments based on the second moment, in this case, surprisingly, the
	results of \cite{kk} only hold in the interior of the $L^1$ regime, essentially on account of
	the fact that the support of the GMC measure is ``too big'' when $\gamma \le 1.$
	This left open the question of treating the regime $\gamma \in
	(0,1]$, with no clear conjecture on the actual truth in this regime. 

In this article we prove the universality of GMC for all $\gamma \in (0,\sqrt
2)$ in the above sense, via proving a result from high dimensional probability
that we expect will have much wider applications. This leads us to the world of
high dimensional central limit theorems. 
\subsection{High dimensional CLT}
The usual classical central limit theorem says that for any fixed $d,$ if $X_1,
X_2, \ldots, X_n$	are i.i.d. $d-$dimensional vectors with mean zero and
covariance matrix $\Sigma$, then 
\begin{align}\label{keyobject}
\frac{1}{\sqrt n}\sum_{i=1}^{n} X_i\overset{d}{\to} Z
\end{align}
where $Z\sim N(0,\Sigma).$
While the one (or fixed) dimensional  case is very well understood at this
point (see e.g. \cite{berry1941accuracy, rio2009upper,
rio2011asymptotic, bobkov2013entropic, bobkov2018berry}), in many applications
one has $d$ growing with $n$
leading to interesting questions about convergence rates as a function of $d$
and $n$. In several applications, one also witnesses phase transitions where  a
CLT holds only when $d$ is less than a certain threshold function of $n.$ 

Multidimensional central limit theorems have been studied extensively since the mid-twentieth century  
\cite{bergstrom1945central} (see also \cite{bhattacharya1977refinements} and
references therein). Considering probabilities of convex
sets, the dependence of the convergence rate on the dimension was 
studied in a long list of works (see for instance, \cite{nagaev2006estimate, senatov1981uniform, gotze1991rate, bentkus2005lyapunov, chen2011multivariate}).

This broad topic has been the test-bed for various ideas and approaches.
Entropy based methods provide a robust framework in this regard, going back to \cite{barron1986entropy} where it was shown, in the one dimensional case, that convergence in relative entropy
to a Gaussian occurs as long as the distribution of the individual variables has finite
relative entropy with respect to the Gaussian.
An optimal convergence rate in relative entropy for distributions having a spectral gap
in one-dimensions was established in\cite{artstein2004rate}
and \cite{johnson2004fisher}. A later result in \cite{bobkov1, bobkov2} using Edgeworth-type expansions also applied to higher dimensions albeit with exponential dependence on dimension. 
Over the past few years there have been several advances in the study of high
dimensional central limit theorems with interesting geometric applications. One
particular instance involves testing whether a sample covariance matrix,
properly centered and scaled, is close to a symmetric Gaussian matrix, which is
related to testing the presence of latent geometry in random graph models
\cite{bder}. In \cite{bubeck2018entropic}, Bubeck and the second named author developed an entropy
based argument to establish a sharp transition for this problem (see also the
work of Eldan and Mikulincer \cite{eldan2020information}).  There have also been developments in the study
of maxima of sums of independent random vectors (see \cite{chernozhukov2013gaussian}).

More recently, two papers deserve particular mention, with the second one
related to the ideas developed in this paper. \cite{zhai2018high} considered the
$L^2$ transportation distance (denoted by $W_2$) between the LHS and RHS in
\eqref{keyobject} and proved an almost optimal dimension dependent bound (up to
logarithmic factors). The proof follows a Lindeberg-type strategy of gradually
replacing $X_i$s with Gaussians and using as key input 
Talagrand’s transportation inequality.  The argument relies on bounding
$W_2(\sqrt{n} Z, \sqrt{n-1}Z+X_n)$ where $Z\sim N(0,\Sigma)$ for some
covariance matrix $\Sigma$, whereas $X_n$ is a general distribution with mean
zero and the same covariance matrix.

However, in our setting the increments $X_i$s will be far from i.i.d. and in fact they will have rank one covariances. 
Thus, the
plausibility of a CLT relies on a global homogenization taking place ruling out a Lindeberg-type approach. 

In a subsequent work, Eldan, Mikulincer, and Zhai \cite{eldan2020clt} used
martingale embeddings constructed by Eldan \cite{eldan} to obtain several new bounds
for convergence in transportation distance and entropy, improving, among other
things, the logarithmic factors in Zhai's result. They also obtained strong
results under the additional assumption of log-concavity.

The use of Skorokhod embeddings has served as a powerful tool to prove CLTs going back to Strassen's original work on
the one dimensional CLT \cite{st}.
At a high level, we will use Skorokhod embeddings as well to prove a new high
dimensional CLT with rank one increments which will then be applied to prove
the previously claimed universality of the GMC. Several further ingredients
will feature, including a seemingly new perturbation result on matrix square roots which
will be reviewed in Section \ref{psdsqrt}. The CLT result is expected to be of wider
interest. Leaving further review and comparisons to earlier results for later,
we now move on to the statement of our main results. 

Our first main result states the universality of the GMC in the sense of \cite{kk} for the random Fourier series model. 
Suppose $\Bra{\go_i, \gt_i}_{i \geq 1}$ are i.i.d. standard Gaussian random variables,
	and $\Bra{\ao_i, \at_i}_{i \geq 1}$ are i.i.d. random variables sampled from a common law $\mu$, satisfying 
	\begin{align}
		\label{eq:acond}
		\E a = 0, \quad \Var(a) = 1, \quad \E[e^{\lam a}] < \infty,
	\end{align}
	for $a \sim \mu$ and $\lam \in \R$.
Recall that
\begin{align}\label{fourier1}
		S_{n, a}(t) \df \sum_{k = 1}^n \f1{\sqrt{k}} \Rnd{\ao_k \cos(2\pi kt) + \at_k \sin(2\pi k t)}, \quad t \in [0, 1],
\end{align}
	and the analogously defined $S_{n, g}(t)$. Further, define the (prelimiting) random measures on $[0,1]$
	\begin{align}\label{gmc1}
		\muan(\d t) \df \f{e^{\g S_{n, a}(t)}}{Z_{n, \g, a}} \d t, \quad Z_{n, \g, a} \df \E e^{\g S_{n, a}(t)},
	\end{align}
	While it is known since the work of Kahane \cite{kahane1985chaos}, that
	$\mugn \to \mug$, the latter referred to as the \emph{Gaussian
	multiplicative chaos}, recently, in \cite{junnila2020multiplicative}, it
	was shown that $\muan \to \mua$ weakly almost surely to some multiplicative
	chaos measure $\mua$ as well. 
	
	\begin{theorem}[Universality of GMC]
	\label{thm:gmc}
	Let $\g \in (0, \sqrt 2)$. 
	Then, there is a coupling of $\Bra{\ao_i, \at_i}_{i \geq 1}$ and $\Bra{\go_i, \gt_i}_{i \geq 1}$ such that
	\[
		\mug \ll \mua \ll \mug,
	\]
	that is, the multiplicative chaos measures are mutually absolutely continuous.
\end{theorem}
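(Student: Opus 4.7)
The goal is to engineer a coupling of the disorder sequences $\Bra{\ao_k, \at_k}_{k \geq 1}$ and $\Bra{\go_k, \gt_k}_{k \geq 1}$ under which the formal Radon--Nikodym derivative
\[
\f{d\mua}{d\mug}(t) \;\approx\; \exp\Rnd{\g\Rnd{S_{\infty,a}(t) - S_{\infty,g}(t)} + \text{(correction)}}
\]
exists, is almost surely positive, and is almost surely finite on the support of $\mug$. Since $\mug$ concentrates on the $\g$-thick points of $S_{\infty, g}$, the entire problem reduces to controlling the difference $S_{n,a}(t) - S_{n,g}(t)$ uniformly across this fractal set all the way down to $\g \in (0, 1]$.

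The first move is to realize this control as a high-dimensional CLT. Fix a fine discretization $\{t_j\}_{j=1}^m \sse [0,1]$, chosen in accordance with the scale at which one probes $\mug$. The vector $\mathbf X_n \df \Rnd{S_{n,a}(t_j)}_{j=1}^m \in \R^m$ decomposes as a sum $\sum_{k=1}^n Y_k$ of independent increments, where $Y_k = \f{1}{\sqrt{k}}\Rnd{\ao_k \mathbf c_k + \at_k \mathbf s_k}$ lives in the two-dimensional subspace spanned by the evaluated Fourier vectors $\mathbf c_k, \mathbf s_k \in \R^m$. The total covariance $\Sigma_n \df \sum_{k=1}^n \Cov(Y_k)$ is a sum of rank-$\leq 2$ matrices, yet is very nearly isotropic because the Fourier modes distribute energy evenly across the $t_j$. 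One then invokes the paper's new high-dimensional CLT, which uses Skorokhod embeddings and produces a coupling of $\mathbf X_n$ with a Gaussian $\mathbf Z_n \sim \cN(0, \Sigma_n)$ whose error depends precisely on such isotropic features of $\Sigma_n$. Summed over dyadic blocks of Fourier modes, these errors are controllably small.

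The next step is to upgrade the projected coupling to a path-wise coupling of the full sequences. One processes the Fourier modes one dyadic block at a time and, within each block, uses the CLT to couple the non-Gaussian contribution to its Gaussian surrogate. To patch together couplings from successive blocks one must compare the coupling schemes produced from $\Sigma_n^{1/2}$ at successive $n$, which is where the new perturbation theorem on square roots of PSD matrices becomes crucial. Once the coupling is in place, one analyzes the limiting density: on a $\mug$-typical $t$, the properly centered difference $\g(S_{n,a}(t) - S_{n,g}(t))$ converges almost surely to a finite limit whose exponential has finite, strictly positive expectation under $\mug$. Combined with martingale convergence of the prelimiting density ratios $d\muan/d\mugn$, this yields $\mug \ll \mua$; interchanging the roles of $a$ and $g$ in the argument gives the reverse direction.

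The chief obstacle is extracting the CLT in the low-rank setting with bounds strong enough to cover the full subcritical regime $\g \in (0,\sqrt 2)$. A Lindeberg-type replacement is unavailable, because no individual rank-one increment $Y_k$ is remotely Gaussian in the ambient $\R^m$. The Gaussian approximation must arise instead through a \emph{global} isotropization of many rank-one contributions, and the resulting error bounds must be sharp enough to remain useful on the support of the GMC even when $\g \leq 1$, where the support is large and the $L^2$-based second moment techniques underlying the earlier work \cite{kk} collapse.
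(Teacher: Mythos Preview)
Your outline is broadly correct and matches the paper's strategy: reduce to thick points, apply the high-dimensional CLT block by block with error controlled by the isotropy of the Fourier covariance, then pass to the limit via convergence and uniform integrability of the prelimiting density ratios. Two points deserve correction.

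First, you misplace the role of the square-root perturbation lemma. It is \emph{not} used to ``patch together couplings from successive blocks'' or to compare $\Sigma_n^{1/2}$ across different $n$. The blocks are coupled independently, each conditionally on the past (which determines the thick set $\cK$), and there is nothing to patch. The square-root lemma lives entirely \emph{inside} the proof of the CLT itself: writing the Skorokhod-embedded sum as $\d\Phi_t = \Gamma_t\, \d W_t$ and its Gaussian surrogate as $\d\Psi_t = G_t\, \d W_t$ with $G_t = \sqrt{\E\Gamma_t^2}$, one needs $\norm{\Gamma_t - G_t} = \norm{\sqrt{A_t + p_t U} - \sqrt{p_t U}} \lesssim \sqrt{\norm{A_t}}$, where $A_t$ is the centered random matrix controlled by matrix Bernstein. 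That is the single place the perturbation bound enters.

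Second, you omit the hierarchical reduction, which in the paper is not just ``a fine discretization'' but a structural replacement of $S_{n,a}$ by a tree-indexed process $\tS_a$ (with mesh $2^n f(n)$ at level $n$) for which $\mua$ and $\muta$ are already known to be mutually absolutely continuous by \cite{kk}. This is what makes the problem genuinely finite-dimensional at each level and permits the conditional block-by-block coupling on children of thick parents. Your phrase ``fine discretization $\{t_j\}$ chosen in accordance with the scale'' gestures at this but does not commit to the tree structure that the argument actually uses.
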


Natural higher dimensional analogues of the above theorem  hold as well. We elaborate more on this in Section \ref{gdom} later. 

As already mentioned, this solves the main open question in \cite{kk} where the authors had proved the above result for all $\gamma \in (1,\sqrt 2).$ We will
review their proof strategy in more detail but for the moment we move on to the next result, which will serve as a key
input for us. We expect this new high dimensional central limit theorem to find other applications as well.

	Suppose $v_1, v_2, \ldots, v_n \in \R^d$ are $n$ vectors (where $d \leq n$) with $\norm{v_i}_2  \leq \sqrt{d}$ for
	all $i$, (in fact in our application we will have the stronger condition $\norm{v_i}_\infty  \leq 1$),  and
	let $\mu$ be a distribution on $\R$ with mean 0, variance 1, and with stretched-exponential tails, i.e.,
	there are constants $C, c, \alpha > 0$ such that if $a \sim \mu$, then
	\begin{align}\label{tailass}
		\P(|a| \geq t) \leq C \exp(-c t^\alpha), \quad \text{for all $t \geq 0$}.
	\end{align}
	Define the normalized sums
	\begin{align*}
		S^a_n &= \f{1}{\sqrt n} \sum_{i=1}^n a_i v_i, \quad a_1, a_2, \ldots, a_n \sim \mu, \text{i.i.d.}, \\
		S^g_n &= \f{1}{\sqrt n} \sum_{i=1}^n g_i v_i, \quad g_1, g_2, \ldots, g_n \sim \Nor{0, 1}, \text{i.i.d.}.
	\end{align*}

\begin{theorem}
	\label{thm:main}
There is a coupling of $S^a_n$ and $S^g_n$ (and thus, of the collections $\{a_i\}_i$ and $\{g_i\}_i$)
	such that for any $r \geq 1$, we have
	\[
		\norm{S^a_n - S^g_n}_\infty \les r \log^{1 + \f1\be} n \cdot \max\Rnd{\F{d\norm{U}}{n}^{1/4}, \F{d}{n}^{1/2}},
	\]
	with probability at least $1 - O(n^{-r})$. Here, $\be = \f{\al}{2 + \al}$ (see Remark \ref{rem:setails} below) and
	$\norm{U}$ refers to the spectral/operator norm of the matrix
	$U$ defined as
	\[
		U \df \f1n \sum_{i = 1}^n v_i v_i^T.
	\]
	Moreover, $\lesssim$ denotes, as usual, that there is a universal constant which when multiplied to the RHS makes the LHS smaller.
\end{theorem}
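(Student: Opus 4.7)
The plan is to couple $\{a_i\}$ with $\{g_i\}$ via a Skorokhod embedding, reduce the $\ell_\infty$ coupling error to an operator-norm statement about a difference of matrix square roots, and close the loop via matrix Bernstein combined with the paper's new PSD perturbation lemma. For the coupling, for each $i$ I would take an independent standard $1$D Brownian motion $W^{(i)}$ and use a Skorokhod embedding (e.g.\ Az\'ema--Yor) to produce a stopping time $\tau_i$ with $W^{(i)}_{\tau_i} \sim \mu$ and $\E[\tau_i] = 1$. The stretched-exponential tail of $\mu$ with exponent $\al$ translates into a matching tail for $|\tau_i - 1|$ with exponent $\be = \al/(2+\al)$; a union bound over $i \leq n$ on this tail is the source of the $\log^{1/\be} n$ factor in the final bound. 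Setting $a_i := W^{(i)}_{\tau_i}$ and $g_i := W^{(i)}_1$, the quantity to control is
\[
	\sqrt{n}\,(S_n^a - S_n^g) = D_n := \sum_{i=1}^n v_i \Rnd{W^{(i)}_{\tau_i} - W^{(i)}_1}.
\]

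The conceptually central step is to reduce $\norm{D_n}_\infty$ to a matrix-valued quantity. Concatenating the independent continuous martingales $(W^{(i)}_t v_i)_t$, stopped at $\tau_i$, one obtains a single $\R^d$-valued continuous martingale whose matrix quadratic variation at the terminal time is the ``stopped covariance'' $\Gamma := \sum_i \tau_i v_i v_i^T$; a matrix Dambis--Dubins--Schwarz argument then allows one to represent $T_n := \sum_i a_i v_i$, path-wise up to controllable Brownian fluctuations, as $\Gamma^{1/2} Z$ for $Z \sim \Nor{0, I_d}$. The analogous construction for the Gaussians yields $\tilde T_n := \sum_i g_i v_i = (nU)^{1/2} Z'$. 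Coupling $Z = Z'$ (absorbing the path-wise fluctuations into an extra $\sqrt{r \log n}$ factor via Brownian maximal inequalities) yields, with probability $\geq 1 - O(n^{-r})$,
\[
	\norm{D_n}_\infty \lesssim \sqrt{r \log d} \cdot \norm{\Gamma^{1/2} - (nU)^{1/2}} + (\text{lower order}).
\]

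It remains to bound $\norm{\Gamma - nU}$ and convert it into a bound on square roots. Applying matrix Bernstein to $\Gamma - nU = \sum_i (\tau_i - 1) v_i v_i^T$, the matrix variance is $\norm{\sum_i \E(\tau_i-1)^2 \norm{v_i}^2 v_i v_i^T} \lesssim d \cdot n \norm{U}$ (using $\norm{v_i}_2^2 \leq d$), and the worst-case summand is $\lesssim d \log^{1/\be} n$; matrix Bernstein then gives $\norm{\Gamma - nU} \lesssim \sqrt{d n \norm{U} \cdot r \log n} + d \cdot r \log^{1 + 1/\be} n$ with probability $\geq 1 - O(n^{-r})$. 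Feeding this into the PSD square-root perturbation lemma of Section \ref{psdsqrt} and dividing by $\sqrt n$ produces exactly the two regimes claimed: $(d\norm{U}/n)^{1/4}$ (from the matrix variance) and $(d/n)^{1/2}$ (from the max summand), each within the stated $\log^{1 + 1/\be} n$ factor.

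The main obstacle, and the point where the new perturbation lemma is indispensable, is the reduction to an operator-norm bound on $\norm{\Gamma^{1/2} - (nU)^{1/2}}$: since the $\tau_i$'s are correlated with the Brownian paths producing $W^{(i)}_{\tau_i} - W^{(i)}_1$, $D_n$ is not conditionally Gaussian, and the classical H\"older-type bound $\norm{A^{1/2} - B^{1/2}} \lesssim \norm{A - B}^{1/2}$ is too blunt to accommodate the rank-one structure of the perturbations $(\tau_i - 1) v_i v_i^T$ together with the anisotropy encoded in $U$ in a way that yields the sharp $(d\norm{U}/n)^{1/4}$ scaling.
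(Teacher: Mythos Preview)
Your proposal has a genuine gap at the ``matrix Dambis--Dubins--Schwarz'' step. The representation $T_n = \Gamma^{1/2} Z$ with $Z\sim\mathcal N(0,I_d)$ and $\Gamma=\sum_i\tau_i v_iv_i^T$ is false: even in one dimension, a continuous martingale stopped at a random time is \emph{not} distributed as $\sqrt{[M]_\infty}\cdot Z$ for a Gaussian $Z$ usable for coupling, precisely because the terminal quadratic variation is correlated with the driving Brownian motion (e.g.\ take $M$ a Brownian motion stopped when it hits $\pm 1$). You acknowledge this correlation in your last paragraph, but then attribute the fix to the square-root perturbation lemma; that lemma does nothing to address the correlation issue, and ``coupling $Z=Z'$'' has no meaning once the representation fails. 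Relatedly, applying matrix Bernstein to $\sum_i(\tau_i-1)v_iv_i^T$ is problematic because the summands are unbounded.

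The paper's proof avoids this by never passing to the terminal quadratic variation. Instead it keeps the full path: writing $\d\Phi_t=\Gamma_t\,\d W_t$ with the \emph{adapted} matrix process $\Gamma_t=\sqrt{\frac1n\sum_i v_iv_i^T\mathbf 1_{t<\tau_i}}$, and defining the Gaussian proxy $\Psi_t$ via $\d\Psi_t=G_t\,\d W_t$ with $G_t=\sqrt{p_tU}$ deterministic, driven by the \emph{same} $d$-dimensional Brownian motion $W$. The per-coordinate quadratic variation of the difference is then bounded by $\int_0^\infty\|\Gamma_s-G_s\|^2\,\d s$, and the square-root perturbation lemma is applied \emph{pointwise in $s$} to $\Gamma_s-G_s=\sqrt{A_s+p_sU}-\sqrt{p_sU}$, where $A_s=\frac1n\sum_i v_iv_i^T(\mathbf 1_{s<\tau_i}-p_s)$ has \emph{bounded} centered summands, so matrix Bernstein applies directly. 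The time integral is split at $s_\star\asymp\log^{1/\beta}n$; the stretched-exponential tail of $\tau$ kills the remainder, and a union bound over a fine mesh of times on $[0,s_\star]$ (plus a regularity argument) upgrades the one-point Bernstein bound to a uniform one. This pathwise comparison, not a comparison of terminal covariances, is the missing idea.
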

A few remarks are in order. First, it is natural to wonder about the necessity of the various hypotheses in the above
statement. The $d\le n$ hypothesis is
natural since one does need more randomness than dimensions to have a chance at homogenization per coordinate (a short
counterexample in the case $d=n$ is provided later in Remark \ref{rmk:dncounter}).  The stretched exponential tail assumption could
potentially be relaxed but, as will be explained shortly (see Remark \ref{rem:setails}), will indeed feature somewhat crucially in our arguments.  

Finally, while our setting is particularly singular compared to past results on account of the low rank of the
covariance matrices of the increments, it might be instructive to compare Theorem \ref{thm:main} to  the $W_2$ bounds
obtained in the already alluded to results in \cite{zhai2018high, eldan2020clt}. The latter, up to 
log-corrections is $\kappa\sqrt{\frac{d}{n}}$, where $\kappa$ is an assumed $L^2$ bound on the increment random vectors.
Translated to our setting yields a $W_2$ bound of $\frac{d}{\sqrt n}$ since  $\kappa \le \sqrt{d}$ by hypothesis in our case. Thus, note that this $W_2$  bound  is consistent with an
$L^{\infty}$ bound of $\sqrt{\frac{d}{n}}$ while we get a bound of order $(\frac{d}{n})^{1/4}$ ignoring smaller
correction factors, assuming $\|U\|$ is sub-polynomial in $n$. We are inclined to believe that indeed $\sqrt{\frac{d}{n}}$ is the optimal
$L^\infty$ bound and the suboptimal $1/4$ exponent stems from a perturbative bound on matrix square roots which we will elaborate more on shortly.

We provide a short outline of the key ideas next.

\subsection{Idea of the proof}\label{iop} We first discuss the proof of the central limit theorem since that involves
all the new ideas in the paper.  Given this, the proof of Theorem \ref{thm:gmc} essentially follows from the arguments
in \cite{kk}.

The main difficulty in proving Theorem \ref{thm:main} is that the increments
live in one dimensional subspaces and hence are far from isotropic. This rules
out an approach reliant on a
Lindeberg-type argument similar to the one implemented in \cite{zhai2018high}. Thus, to
prove a central limit theorem of the form of Theorem \ref{thm:main}, one needs to
capture the cancellations that occur globally induced by increments occurring
along a given direction multiple times across the sum. As already alluded to,
we employ a path-wise analysis to accomplish this inspired by the use of
martingale embeddings to derive CLTs going back to \cite{st} and more recently in the high dimensional case in \cite{eldan2020clt}. However, while most prior
works on CLT consider smoother metrics such as the Wasserstein $L^2$ metrics,
here, on account of the application required for Theorem \ref{thm:gmc}, we seek to work with the $L^{ \infty}$ norm. Further, the rank one nature
of the increments is in stark contrast to more
commonly analyzed settings where the increments are more isotropic.

We will use Skorokhod embeddings to use a common set of Brownian motions to embed both the variables $a_i$ and  $g_i$.
Let $B^1,B^2,\ldots ,B^n$ be $n$ independent Brownian motions with $\tau_i$
be i.i.d. stopping times such that $B^i_{\tau_i}=a_i.$ Then considering
$X^i_t$, the stopped process $v_i B^i_{t \wedge \tau_i},$  observe that $S_n^a=\frac{1}{\sqrt n}\sum_{i=1}^n X^i_\infty.$
Now,
\begin{align}
\d X^i =v_i \1_{t < \tau_i} \d B^i ,
\end{align}
and the quadratic variation process is $\d {[X^i]}_t=v_iv_i^T \1_{t < \tau_i}.$
By a change of variable, the process $\frac{1}{\sqrt n}\sum_{i=1}^n X^i_t$ has
the {differential} representation $\sqrt{\frac{1}{n}\sum_{i=1}^n v_iv_i^T \1_{t < \tau_i}} \d W_t$  
where $W_t$ is a standard $d-$dimensional Brownian motion. We now compare
it with the ``averaged'' process $$\sqrt{\frac{1}{n}\sum_{i=1}^n v_iv_i^T \P(t <
\tau_i)} \d W_t$$ whose infinite-time limit is indeed an embedding of the Gaussian
vector $S_n^g$ which can be checked by computing the limiting quadratic
variation.  Thus, this reduces the problem to the analysis of the matrix process
$$\sqrt{\frac{1}{n}\sum_{i=1}^n v_iv_i^T \1_{t < \tau_i}}
-\sqrt{\frac{1}{n}\sum_{i=1}^n v_iv_i^T \P(t < \tau_i)}.$$ The matrix Bernstein
inequality implies that the matrices without the square root are close in
operator norm which then necessitates establishing a bound on the closeness of
the square roots of two positive semi-definite matrices which are themselves
close. While perturbative results of such kind have been the object of
extensive research, see for instance \cite{schmitt1992perturbation, perturb}, often such bounds rely on
estimates of the lowest singular value of such matrices which are unavailable
for the kind of matrices we will encounter. We prove a perturbative bound
for square roots of matrices analogous to what one has for real numbers, which is of independent interest. It seems, somewhat surprisingly, that such a bound hasn't appeared in the literature before.

Given the CLT result, the proof of the GMC universality result Theorem \ref{thm:gmc} follows the strategy in \cite{kk}. While a
more detailed outline will be presented in Section \ref{gmcpf}, for the moment let us simply  mention that the proof proceeds
by  discretizing the model into a hierarchical model, i.e., comparing the log-correlated process $S_{n, a}(t)$ to a
model on a tree (akin to how branching random walk is a, simpler to analyze, hierarchical proxy for the Gaussian free field), which reduces the problem to a finite dimensional one. We will then apply the CLT result Theorem
\ref{thm:main}
to compare the models with noise variables $a$ and the Gaussians $g$ on the tree. In the application we will take
$n=2^m$, and $d$ will be essentially the support size of the GMC at that scale which is roughly the set of $\gamma$
thick points and consequently has size approximately $n^{1-\frac{\gamma^{2}}{2}}$ which follows from a first moment
argument (note that the expression becomes negligible when $\gamma \ge \sqrt 2$ indicating the triviality of the GMC in this regime).  We will need the closeness bound delivered by the CLT result to be summable in $m.$ Note that it is
exponentially small in $m$ for all $\gamma\in (0,\sqrt 2)$ only when $\|U\|$ is sub-polynomial in $n$. However, the
worst case bound on $\|U\|$ given by $\|U\|\le \tr(U)\le d$, where the last inequality follows since $\|v_i\|^2_2=O(d)$ for all $i\le n$ by the hypothesis in Theorem
\ref{thm:main}), makes the error bound effective only when $\frac{d^2}{n}$
is small and consequently when $\gamma \in (1,\sqrt 2)$ recovering the result of \cite{kk}. Thus, to obtain the result for
all $\gamma$, we will prove that $\|U\|$ is poly-logarithmic in $n$, i.e., $U$ is approximately isotropic. At this point
we will crucially use that $U$ admits strong connections to a discrete
Fourier transform (DFT), matrix (see e.g., \url{https://en.wikipedia.org/wiki/DFT_matrix}).

\begin{remark}
	\label{rem:setails} An assumption that will feature crucially in our proof is a stretched exponential tail on the
	stopping time $\tau$	in our Skorokhod embedding of $\mu$. This piece is furnished by \cite[Theorem
	1]{sawyer1972remark}, which shows that a stretched exponential tail of parameter $\al$ for $\mu$ transfers to a
	stretched exponential tail of parameter $\be$ for (a particular) Skorokhod embedding stopping time $\tau$, where $\be = \f{\al}{2 +
	\al}$. See \cite[Section 2]{sawyer1972remark} for more details on the construction of such a $\tau$.
\end{remark}

\subsection{Organization}
The rest of the article is organized as follows.  The upcoming Section \ref{main1} is devoted to the proof of the CLT
result Theorem \ref{thm:main}. We want to draw particular attention to Section \ref{psdsqrt} where perturbation results
for square roots of positive semi-definite matrices are obtained. Given the CLT result, Section \ref{gmcpf} essentially
follows the strategy of \cite{kk} to prove Theorem \ref{thm:gmc}. Certain standard arguments are included in the
Appendix for completeness.

\subsection{Acknowledgements} SG  thanks Yujin Kim for mentioning the problem of absolute continuity of non-Gaussian
multiplicative chaos measures addressed in this article. The authors also thank
Nikhil Srivastava for helpful discussions on matrix perturbation results. SG
was partially supported by  NSF CAREER Grant-1945172. 

\section{High dimensional CLT: Proof of Theorem \ref{thm:main}}\label{main1}
For the ease of reading we will break the argument into several short subsections each serving a distinct purpose. 

\subsection{Construction of the coupling.}

Consider Brownian motions (in $\R$) $B^1, \ldots,
B^n$, and define the processes $X^i_t$ via $\d X^i_t = v_i \1_{t < \tau_i} \d B^i_t$ (with $X^i_0 = 0$), where $\tau_i$ is
the (a.s. finite) stopping time
associated with $B^i$ as given by Remark \ref{rem:setails} such that $B^i_{\tau_i} \sim \mu$. Then, clearly
$X^i_\infty$ has the same law as $v_i a_i$, and so
\[
	\Phi_\infty \sim \f{1}{\sqrt n} \sum_{i = 1}^n v_i a_i = S^a_n, \quad
			\text{where } \Phi_t \coloneq \f{1}{\sqrt n} \sum_{i = 1}^n X^i_t.
\]

Since $\d \Phi_t = \f{1}{\sqrt n} \sum_{i = 1}^n v_i \1_{t < \tau_i} \d B^i_t$,
its quadratic (co-)variation process is given by
\[
	\d [\Phi]_t = \f{1}{n} \sum_{i = 1}^n v_i v_i^T \1_{t < \tau_i} \d t = \G^2_t \dt, \quad \G_t \coloneq
	\sqrt{\f{1}{n} \sum_{i = 1}^n v_i v_i^T \1_{t < \tau_i}}.
\]
It will be convenient to introduce the compact notation $\d \Phi_t= V_t \d B_t$
where $B=(B^1,\cdots, B^n)$ and $V_t$ is the matrix with the $i^{th}$ column
$V^i_t$ equal to $\f{1}{\sqrt n}  v_i \1_{t < \tau_i}.$ At this point we apply
a change of variable justified by the following Lemma \ref{lem:diagonalization}
to alternatively express $\Phi_t$ as
\[
	\d \Phi_t = \G_t \d W_t
\]
for a Brownian motion $W$ on $\R^d$ (on a potentially extended probability space).

\begin{lemma}
	\label{lem:diagonalization}
	Consider a centered Ito process $X_t$ on $\R^d$ defined as $\d X_t = A_t \d B_t$, where $A_t \in \R^{d \times n}$ is a matrix-valued process
	adapted to the Brownian motion $B_t$ (in $\R^n$). Then, there exists a standard Brownian motion $W_t$ in $\R^d$ (on a potentially
	extended probability space) such that
	\[
		\d X_t = Q_t \d W_t, \quad Q_t \df \sqrt{A_t A_t^T},
	\]
\end{lemma}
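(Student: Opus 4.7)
The plan is to construct $W$ explicitly via a change-of-variable using the Moore--Penrose pseudoinverse of $Q_t$ and then to verify its properties with Lévy's characterization of Brownian motion.

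\textbf{Step 1: Setting up the candidate for $W$.} Let $Q_t = \sqrt{A_t A_t^T}$ and let $Q_t^+$ denote its Moore--Penrose pseudoinverse; since $Q_t$ is symmetric positive semi-definite, $P_t \coloneq Q_t Q_t^+ = Q_t^+ Q_t$ is the orthogonal projection onto $\mathrm{range}(Q_t)$. Enlarge the probability space to support an independent standard Brownian motion $B'_t$ in $\R^d$, and define
\[
    W_t \df \int_0^t Q_s^+ \, \d X_s + \int_0^t (I - P_s) \, \d B'_s.
\]
This is a continuous local martingale in $\R^d$ started at $0$.

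\textbf{Step 2: Verify $W$ is a standard Brownian motion.} By Lévy's characterization it suffices to show $\d[W]_t = I \, \d t$. Since $\d [X]_t = A_t A_t^T \, \d t = Q_t^2 \, \d t$, the first integral contributes $Q_t^+ Q_t^2 (Q_t^+)^T \, \d t = P_t P_t^T \, \d t = P_t \, \d t$ (using $P_t$ symmetric idempotent). The second integral contributes $(I - P_t)(I - P_t)^T \, \d t = (I - P_t) \, \d t$, and the cross term vanishes by independence of $B'$ from $(X, B)$. Summing gives $\d[W]_t = P_t \, \d t + (I - P_t) \, \d t = I \, \d t$, as required.

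\textbf{Step 3: Verify the representation $\d X_t = Q_t \, \d W_t$.} By construction,
\[
    Q_t \, \d W_t = Q_t Q_t^+ \, \d X_t + Q_t(I - P_t) \, \d B'_t = P_t \, \d X_t + 0,
\]
where the second term vanishes because $Q_t(I - P_t) = Q_t - Q_t P_t = 0$. It remains to check $P_t \, \d X_t = \d X_t$, i.e.\ $P_t A_t = A_t$. This holds because $\mathrm{range}(A_t) = \mathrm{range}(A_t A_t^T) = \mathrm{range}(Q_t^2) = \mathrm{range}(Q_t)$, and $P_t$ is the projection onto this subspace. Integrating yields $X_t = \int_0^t Q_s \, \d W_s$ as claimed.

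\textbf{Expected difficulty.} The construction itself is a standard device in martingale theory, so the argument is essentially algebraic once the right candidate for $W$ is written down. The one point requiring a small amount of care is the possibly singular behaviour of $Q_t$ (which will indeed happen in our application, where the increments $v_i v_i^T \mathbf{1}_{t < \tau_i}$ have rank one): measurability of $t \mapsto Q_t^+$ follows from continuity of the pseudoinverse restricted to matrices of locally constant rank together with standard measurable-selection arguments, and the identities $P_t A_t = A_t$ and $Q_t P_t = Q_t$ used above hold pointwise regardless of the rank of $Q_t$. These technicalities are routine and do not obstruct the scheme above.
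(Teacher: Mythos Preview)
Your proof is correct and follows essentially the same approach as the paper: both construct $W$ explicitly as $Q_t^+ \d X_t$ plus an independent completion on $\ker Q_t$, then verify via L\'evy's characterization. The only cosmetic differences are that the paper writes the completion term as $\sqrt{I - Q_t^+ Q_t}\, \d B'_t$ (which equals your $(I-P_t)\,\d B'_t$ since $I-P_t$ is a projection) and verifies $\d X_t = Q_t\, \d W_t$ by showing the difference has zero quadratic variation rather than by your direct computation using $P_t A_t = A_t$.
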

If $Q_t$ is invertible, then $W_t$ is defined through $\d W_t=Q_t^{-1}A_t\d
B_t$ which is a martingale with quadratic variation process having differential
increment $I \d t.$ By Levy's characterization (see \cite[Theorem
3.16]{shreve}) this implies that this is a Brownian motion. In the case $Q_t$
is singular, the proof is only a bit more complicated and the details are
provided in the appendix.

Continuing with the proof of Theorem \ref{thm:main},
this representation suggests that a potential Gaussian approximator of this process
would try to match the infinitesimal covariance structure of $\Phi_t$ at each time $t$ via a deterministic proxy.
Consequently, it is natural to introduce a process $\Psi_t$ defined via
\[
	\d \Psi_t = G_t \d W_t, \quad \Psi_0 = 0, \quad \text{ where $G_t \coloneq \sqrt{\E\G_t^2}$}.
\]
It is clear that $\Psi_\infty$ is Gaussian, so it suffices to verify that it has the
same covariance structure as $S^g_n$, which is
\[
	\Cov\Rnd{\f{1}{\sqrt n} \sum_{i = 1}^n v_i g_i} = \f{1}{n} \sum_{i = 1}^n v_i v_i^T \eqcolon U.
\]
This follows from the SDE for $\Psi_t$ since
\begin{align*}
	\f{\d}{\d t}\Cov(\Psi_t) &= G_t G_t^T \\
						  &= \E\G_t^2 \\
						  &= \E[\f{1}{n}\sum_{i = 1}^n v_i v_i^T \1_{t < \tau_i}] \\
						  &= U p_t,
\end{align*}
where
\[
	p_t \coloneq \P(\tau > t).
\]
Then,
\[
	\Cov(\Psi_\infty) = U \int_0^\infty p_t \d t = U \E \tau = U.
\]
Thus, $\Psi_\infty$ has the same law as $S^g_n$, and therefore the pair $(\Phi_\infty, \Psi_\infty)$ is a valid coupling of $S^a_n$
and $S^g_n$. Note that the probability space supporting the coupling is the extended probability space on which $W_t$ is defined.

\subsection{Bounding the error per coordinate}
To establish the promised bound on the coupling error $\norm{S^a_n - S^g_n}_\infty$ in Theorem
\ref{thm:main}, we will investigate the gap per coordinate. This gap will turn out to have light tails, enabling a union
bound over all $d$ coordinates.

Without any loss of generality, let us consider the first coordinate of the gap process, i.e., $\Phi^1_t - \Psi^1_t$
(we will use superscripts to denote the coordinate).
By definition,
\begin{align*}
	\Phi^1_t - \Psi^1_t &= \int_0^t e_1^T (\G_s - G_s) \d W_s,
\end{align*}
where $e_i^T$ denotes the $i$-th coordinate vector in $\R^d$.
A useful tool to understand the tails of this process is to consider the quadratic variation, which is given by
\begin{align}
	\label{eq:quadvar}
	[\Phi^1 - \Psi^1]_t &= \int_0^t \norm{e_1^T(\G_s - G_s)}^2_2 \d s \leq \int_0^t \norm{\G_s - G_s}^2 \d s,
\end{align}
where, as before, we use the notation $\norm{\cdot}$ for the operator (spectral) norm of a matrix (note that in the
above inequality we use the straightforward bound $\norm{e_1^T A}_2^2 \le \norm{A}^2$ for any symmetric matrix $A$).
At this point, it is helpful to recall the form of $\G_s$ and $G_s$:
\begin{align}
	\label{eq:gammagdefs}
	\G_s &= \sqrt{\f{1}{n} \sum_{i = 1}^n v_i v_i^T \1_{s < \tau_i}}, \\
	G_s &= \sqrt{\E\G_s^2}.
\end{align}
Letting \begin{equation}\label{centeredindicator}
\chi^i_s= \1_{s < \tau_i} - p_s,
\end{equation} 
we may write
\begin{align*}
	\G_s &= \sqrt{\f{1}{n} \sum_{i = 1}^n v_i v_i^T \chi^i_s + p_s U},
\end{align*}
using the notation $U = \f{1}{n} \sum_{i = 1}^n v_i v_i^T$ introduced earlier. Since $G_s^2 = \E \G_s^2 = p_s U$, the
gap $\G_s - G_s$ is given by
\begin{align}
	\label{eq:gap}
	\G_s - G_s = \sqrt{A_s + p_s U} - \sqrt{p_s U}, \quad A_s \df \f{1}{n} \sum_{i = 1}^n v_i v_i^T \chi^i_s.
\end{align}

Now $A_s$ is an average of $n$ independent, mean zero matrices, which by matrix concentration estimates will have small
operator norm.   Now observe that if $A_s$ is small in operator norm, we expect $\sqrt{A_s + p_s U} \approx
\sqrt{p_s U}$ allowing us to bound $\norm{\G_s - G_s}.$
This leads us to consider perturbations of square roots of positive semi-definite matrices. The upcoming section is
devoted to this topic which, beyond the applications in this paper, is expected to be of broader interest.

\subsection{Perturbations of square roots of positive semi-definite matrices}\label{psdsqrt}
{General results probing the spectral properties of the  gap $\sqrt{P + Q} - \sqrt{P}$, where $P$ is a positive semi-definite matrix and $Q$ is a
	perturbation matrix have a significant history, but they all seem to rely on the largeness of the {\it lowest
	eigenvalue of $P$} (see
\cite{schmitt1992perturbation} for instance)
a quantity that is often hard to control and is expected (through numerical experiments) to be particularly small in our
setting.} Note on the other hand that for non-negative real numbers:
\begin{align}\label{realsqrt}
|\sqrt{p + q}-\sqrt p| \le \sqrt q
\end{align}
independent of $p$, while if $p\gg q,$ then the above bound can be improved to $O(\frac{q}{\sqrt p}).$ Since only a
bound of the latter kind is what seems to be available in the literature, one of our key inputs is the following result
analogous to \eqref{realsqrt}.

\begin{lemma}[Square-root gap]
	\label{lem:sqrtgap}
	Suppose $P, Q$ are symmetric matrices such that both $P$ and $P + Q$ are positive semi-definite.
	Then,
	\[
		\norm{\sqrt{P + Q} - \sqrt{P}} \leq 3\sqrt{\norm{Q}}.
	\]
\end{lemma}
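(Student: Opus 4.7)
The plan is to exploit the integral representation
\[
	\sqrt{M} \;=\; \f{1}{\pi} \int_0^\infty \f{1}{\sqrt t}\, M(tI + M)^{-1}\, \d t,
\]
valid for any positive semi-definite matrix $M$ by functional calculus applied to the scalar identity $\sqrt m = \pi^{-1} \int_0^\infty m / \bigl(\sqrt t (t + m)\bigr)\, \d t$. Rewriting $M(tI + M)^{-1} = I - t(tI + M)^{-1}$, the identity contribution cancels upon subtracting the representations for $P$ and $P + Q$, yielding the resolvent-difference formula
\[
	\sqrt{P + Q} - \sqrt{P} \;=\; \f{1}{\pi} \int_0^\infty \sqrt t \;\Box{(tI + P)^{-1} - (tI + P + Q)^{-1}}\, \d t.
\]
This formula, which holds whenever both $P$ and $P + Q$ are positive semi-definite, is the main vehicle of the proof.

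I would then split the integral at $t_\star \df \norm{Q}$. For $t \leq t_\star$, I bound each resolvent crudely by $\norm{(tI + M)^{-1}} \leq 1/t$ (valid for every PSD $M$), so the integrand has operator norm at most $2/\sqrt t$, and this regime contributes at most $4\sqrt{\norm{Q}}$. For $t \geq t_\star$, I invoke the resolvent identity
\[
	(tI + P)^{-1} - (tI + P + Q)^{-1} \;=\; (tI + P)^{-1} Q (tI + P + Q)^{-1},
\]
which together with the PSD resolvent bound gives an integrand of operator norm at most $\norm{Q}/t^{3/2}$; the corresponding integral equals $2\sqrt{\norm{Q}}$. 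Summing and dividing by $\pi$ gives $(6/\pi)\sqrt{\norm{Q}} < 3\sqrt{\norm{Q}}$, as required.

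The conceptual obstacle, and the reason that classical matrix-square-root perturbation bounds such as those in \cite{schmitt1992perturbation} are unavailable here, is that those results rely on a lower bound for $\lam_{\min}(P)$, which cannot be controlled for the degenerate matrices $P = p_s U$ arising in the Skorokhod construction above. The integral representation sidesteps this entirely: the two natural resolvent estimates (the crude $1/t$ bound and the sharper resolvent-identity bound) automatically take over in the complementary ranges of $t$, and no spectral information about $P$ ever enters the argument. The resulting estimate is the precise matrix analogue of the elementary scalar inequality $\Abs{\sqrt{p + q} - \sqrt p} \leq \sqrt q$ featured as \eqref{realsqrt}.
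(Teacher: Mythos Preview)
Your proof is correct and takes a genuinely different route from the paper's argument. The paper proceeds by a shift-and-triangle-inequality trick: setting $\delta = \norm{Q}$, it bounds $\norm{\sqrt{P+Q} - \sqrt{P}}$ by the sum of $\norm{\sqrt{P+Q} - \sqrt{P+Q+\delta I}}$, $\norm{\sqrt{P+Q+\delta I} - \sqrt{P+\delta I}}$, and $\norm{\sqrt{P+\delta I} - \sqrt{P}}$. The two outer terms are handled by diagonalization together with the scalar inequality $\sqrt{a+\delta} - \sqrt{a} \leq \sqrt{\delta}$, while the middle term is controlled by an auxiliary lemma (proved via operator monotonicity of the square root) that applies because $\lambda_{\min}(P+\delta I) \geq \norm{Q}$. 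Each piece contributes $\sqrt{\norm{Q}}$, giving the constant $3$.

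Your resolvent-integral approach is more direct and, as a bonus, delivers the sharper constant $6/\pi \approx 1.91$; it also avoids any appeal to operator monotonicity. Conversely, the paper's argument is more elementary in that it needs no knowledge of the integral formula for $M^{1/2}$, relying only on diagonalization and the positive semi-definite order, which may make it more accessible to readers unfamiliar with that machinery. Both arguments share the essential feature emphasized in the paper: no lower bound on $\lambda_{\min}(P)$ is ever invoked, which is exactly what distinguishes this lemma from the classical perturbation bounds.
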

\begin{remark}
	The optimality of the constant $3$
	remains unclear, and it is not unreasonable to speculate that the optimal constant continues to be $1$ even in the
	matrix setting with $P=0$ being the extremal example. 
\end{remark}

We start with an easier version where the smallest eigenvalue of $P$, $\lm(P),$
is assumed to be larger than $\norm{Q},$ i.e., $\lm(P) \geq \norm{Q}$. While
this is not a new result (see e.g., \cite{schmitt1992perturbation}) we provide
a short proof for completeness. 
\begin{lemma}
	\label{lem:singularvaluebnd}
	Let $P, Q$ be two symmetric $d \times d$ matrices such that $\lm(P) \geq \norm{Q}$. Then,
	\[
		\norm{\sqrt{P + Q} - \sqrt{P}} \leq \sqrt{\norm{Q}}.
	\]
\end{lemma}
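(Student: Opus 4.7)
The plan is to reduce the claim to a Sylvester-type equation for $\sqrt{P+Q}-\sqrt{P}$ and then solve that equation via an explicit integral representation, which is the matrix analogue of the scalar identity $\sqrt{p+q}-\sqrt{p}=q/(\sqrt{p+q}+\sqrt{p})$. Setting $A\df\sqrt{P+Q}$ and $B\df\sqrt{P}$, both symmetric positive semi-definite, the algebraic identity $A(A-B)+(A-B)B=A^2-B^2=Q$ holds despite the non-commutativity of $A$ and $B$. The case $Q=0$ is trivial, so we may assume $\norm{Q}>0$, in which case the hypothesis delivers $\lm(B)=\sqrt{\lm(P)}\geq\sqrt{\norm{Q}}>0$, which is the only property really needed below.

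Next I would consider the conjugated matrix $Y_t\df e^{-At}(A-B)e^{-Bt}$. Since $A$ commutes with $e^{-At}$ and $B$ with $e^{-Bt}$, a direct differentiation gives
\[
Y_t'=-e^{-At}\bigl[A(A-B)+(A-B)B\bigr]e^{-Bt}=-e^{-At}\,Q\,e^{-Bt}.
\]
Because $A,B$ are PSD, $\norm{e^{-At}}\leq 1$, and since $\lm(B)>0$ we have $\norm{e^{-Bt}}\leq e^{-\lm(B)t}$, so $Y_t\to 0$ as $t\to\infty$ and the displayed right-hand side is integrable. Integrating $Y_t'$ over $t\in[0,\infty)$ and using $Y_0=A-B$ yields the representation
\[
A-B=\int_0^\infty e^{-At}\,Q\,e^{-Bt}\,\d t.
\]

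Taking operator norms then gives
\[
\norm{A-B}\leq\norm{Q}\int_0^\infty\norm{e^{-At}}\cdot\norm{e^{-Bt}}\,\d t\leq\norm{Q}\int_0^\infty e^{-\lm(B)t}\,\d t=\frac{\norm{Q}}{\sqrt{\lm(P)}},
\]
and the hypothesis $\lm(P)\geq\norm{Q}$ finishes the argument, since $\norm{Q}/\sqrt{\lm(P)}\leq\norm{Q}/\sqrt{\norm{Q}}=\sqrt{\norm{Q}}$. The only step requiring any genuine care is the validity of the semigroup representation, but once one observes $\lm(B)>0$ all the convergence and decay concerns are automatic, and no substantive obstacle arises; what remains is routine bookkeeping. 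I do not expect the constant $1$ obtained here to survive the extension to Lemma \ref{lem:sqrtgap}, where the hypothesis $\lm(P)\geq\norm{Q}$ is dropped and the final constant degrades to $3$.
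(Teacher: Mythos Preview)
Your proof is correct and takes a genuinely different route from the paper's argument. The paper proceeds by operator monotonicity of the square root: from $-\lam I\preceq Q\preceq\lam I$ (with $\lam=\norm{Q}$) it sandwiches $\sqrt{P+Q}-\sqrt{P}$ between $\sqrt{P-\lam I}-\sqrt{P}$ and $\sqrt{P+\lam I}-\sqrt{P}$, and since $\lam I$ commutes with $P$ these reduce to the scalar inequality $|\sqrt{a\pm b}-\sqrt{a}|\leq\sqrt{b}$ via a common spectral decomposition. Your approach instead solves the Sylvester equation $A(A-B)+(A-B)B=Q$ through the semigroup integral $A-B=\int_0^\infty e^{-At}Qe^{-Bt}\,\d t$, which is the matrix version of dividing by $\sqrt{p+q}+\sqrt{p}$.

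The trade-off is this: the paper's argument is more elementary (only monotonicity plus a diagonalization) and never needs $\lm(P)>0$ except to make $\sqrt{P-\lam I}$ well-defined. Your argument, on the other hand, actually proves the sharper intermediate bound $\norm{\sqrt{P+Q}-\sqrt{P}}\leq\norm{Q}/\sqrt{\lm(P)}$, which is linear in $\norm{Q}$ when $\lm(P)$ is large. This is precisely the kind of estimate the paper flags in Remark~\ref{optimalexponent} as the missing ingredient for improving the exponent $1/4$ to $1/2$ in Theorem~\ref{thm:main}; the obstruction there is not the inequality itself but controlling $\lm(P)$ in the application. Your method is also closer in spirit to the existing literature the paper cites (e.g.\ \cite{schmitt1992perturbation}), which likewise exploits a lower bound on $\lm(P)$.
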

\begin{proof}
	Let us first recall the well-known fact that the square-root is monotone on the positive semi-definite cone,
	that is, if $0 \preceq P \preceq Q$ then $\sqrt{P} \preceq \sqrt{Q}$, where $\preceq$ denotes the positive semi-definite
	order. A quick proof is provided in the appendix (see Lemma \ref{lem:sqrtmonotone}).

	Invoking this fact and the observation that
	\[
		-\lam I \preceq Q \preceq \lam I, \quad  \text{where } \lam = \norm{Q},
	\]
	we conclude that
	\[
		\sqrt{P - \lam I} - \sqrt{P} \preceq \sqrt{P + Q} - \sqrt{P} \preceq \sqrt{P + \lam I} - \sqrt{P}.
	\]
	(note that $\lm(P) \geq \lam$ is needed for $\sqrt{P - \lam I}$ to be well-defined).
	Thus, for any vector $v$, we have
	\[
		v^T (\sqrt{P - \lam I} - \sqrt{P}) v \leq v^T (\sqrt{P + Q} - \sqrt{P}) v \leq v^T (\sqrt{P + \lam I} -
		\sqrt{P}) v,
	\]
	implying the bound
	\[
		\norm{\sqrt{P + Q} - \sqrt{P}} \leq \max\left\{\norm{\sqrt{P + \lam I} - \sqrt{P}}, \norm{\sqrt{P - \lam I} -
				\sqrt{P}}\right\}.
	\]
	Let us now consider the spectral decomposition $P = VDV^T$ (with $D$ a diagonal matrix), and note that
	\[
		\sqrt{P \pm \lam I} = V \sqrt{D \pm \lam I}\ V^T.
	\]
	Consequently,
	\[
		\norm{\sqrt{P \pm \lam I} - \sqrt{P}} = \norm{\sqrt{D \pm \lam I} - \sqrt{D}} \leq \sqrt{\lam}
	\]
	since $\Abs{\sqrt{a \pm b} - \sqrt{a}} \leq \sqrt{b}$ for reals $a \geq b \geq 0$. This concludes the proof.

\end{proof}

With the above preparation we now prove the general statement.

\begin{proof}[Proof of Lemma \ref{lem:sqrtgap}]
	Let $\del = \norm{Q}$. Then, since $\lm(P + \del I) \geq \norm{Q}$,
	by Lemma \ref{lem:singularvaluebnd} we have
	\[
		\norm{\sqrt{P + Q + \del I} - \sqrt{P + \del I}} \leq \sqrt{\norm{Q}}.
	\]
	(A similar conclusion may be derived from existing results on the square-root gap, see \cite[Lemma
	2.2]{schmitt1992perturbation} for instance. We choose to use our variant instead
	to keep the exposition self-contained.)

	Further, if $P = V D V^T$ is the spectral decomposition of $P$, we have
	\begin{align*}
		\norm{\sqrt{P + \del I} - \sqrt{P}} &= \norm{V \Rnd{\sqrt{D + \del I} - \sqrt{D}} V^T} \\
											&\leq \norm{\sqrt{D + \del I} - \sqrt{D}} \\
											&\leq \sqrt{\del}
	\end{align*}
	since $0 \leq \sqrt{a + b} - \sqrt{a} \leq \sqrt{b}$ for all reals $a, b \geq 0$. A similar bound
	holds for $\norm{\sqrt{P + Q + \del I} - \sqrt{P + Q}}$. Combining these bounds via the triangle inequality,
	we have
	\begin{align*}
		\norm{\sqrt{P + Q} - \sqrt{P}} &\leq \norm{\sqrt{P + Q} - \sqrt{P + Q + \del I}} \\
									   & \phantom{====} + \norm{\sqrt{P + Q + \del I} -
		\sqrt{P + \del I}} + \norm{\sqrt{P + \del I} - \sqrt{P}} \\
									   &\leq \sqrt{\del} + \sqrt{\norm{Q}} + \sqrt{\del} = 3\sqrt{\norm{Q}},
	\end{align*}
	finishing the proof.
\end{proof}

\subsection{Bounding the operator norm of $A_s$}

Returning to the problem at hand, Lemma \ref{lem:sqrtgap} now allows us to bound the gap in \eqref{eq:gap} as
\begin{align}
	\label{eq:gapbound}
	\norm{\G_s - G_s} \les \sqrt{\norm{A_s}}.
\end{align} 
Note that above the positive semi-definite requirement is automatically satisfied by virtue of the definitions in
\eqref{eq:gammagdefs}. We will now justify the claim that $A_s$ is small
in operator norm with high probability via the matrix Bernstein inequality. The version we use here is quoted from \cite[Theorem 5.4.1]{vershynin2018high},
which we restate below.
\begin{lemma}[Matrix Bernstein inequality]
	\label{lem:bern}
	Let $M_1, \ldots, M_n \in \R^{d \times d}$ be independent, centered, symmetric random matrices with $\norm{M_i} \leq
	R$, almost surely. Let $\sig^2 = \norm{\sum_{i = 1}^n \E M_i^2}$. Then, for any $x > 0$, we
	have
	\[
		\P\Rnd{\norm{\sum_{i = 1}^n M_i} \geq x} \leq 2d \cdot \Exp{-\f{3}{2} \cdot \f{x^2}{3\sig^2 + Rx}}.
	\]
\end{lemma}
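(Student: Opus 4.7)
The plan is to follow the standard matrix Laplace transform approach, which is a matrix generalization of the scalar Bernstein argument, with Lieb's concavity theorem playing the role that independence plays in the scalar setting.

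First I would reduce bounding $\norm{S}$ (where $S = \sum_i M_i$) to bounding $\lam_{\max}(S)$, at the cost of a factor of $2$: since $\norm{S} = \max(\lam_{\max}(S), \lam_{\max}(-S))$ and the hypotheses on $\{M_i\}$ are invariant under $M_i \mapsto -M_i$, a union bound will produce the prefactor of $2$. Then for any $\theta > 0$, the matrix Chernoff inequality gives
\[
    \P(\lam_{\max}(S) \geq x) \leq e^{-\theta x} \, \E \tr \exp(\theta S).
\]

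Next I would invoke Tropp's master inequality, a direct consequence of Lieb's concavity of $A \mapsto \tr \exp(H + \log A)$ on positive-definite $A$, to separate the expectation across the independent summands:
\[
    \E \tr \exp\Rnd{\theta \sum_i M_i} \leq \tr \exp\Rnd{\sum_i \log \E e^{\theta M_i}}.
\]
To control each term, I would use the scalar Bernstein inequality $e^{\theta z} \leq 1 + \theta z + \f{\theta^2 z^2 / 2}{1 - R\theta / 3}$, valid for $|z| \leq R$ and $\theta R < 3$. Applied spectrally to $M_i$ and combined with $\E M_i = 0$, this yields $\E e^{\theta M_i} \preceq I + f(\theta) \, \E M_i^2$ where $f(\theta) \df \f{\theta^2 / 2}{1 - R\theta/3}$; then $\log(I + A) \preceq A$ for $A \succeq 0$ gives $\log \E e^{\theta M_i} \preceq f(\theta) \E M_i^2$. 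Substituting, together with monotonicity of the trace exponential under $\preceq$ and the elementary fact $\tr \exp(B) \leq d \exp(\norm{B})$ for PSD $B$, produces
\[
    \E \tr \exp(\theta S) \leq d \exp(f(\theta) \sig^2).
\]
Finally, optimizing $-\theta x + f(\theta)\sig^2$ over $\theta \in (0, 3/R)$ at $\theta_\star = x / (\sig^2 + Rx/3)$ gives the exponent $- x^2 / (2\sig^2 + 2Rx/3)$, which algebraically equals $-\f{3}{2} \cdot \f{x^2}{3\sig^2 + Rx}$. Combining with the factor of $2$ from the initial union bound will finish the argument.

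The hard part is Lieb's concavity theorem itself: in the commutative scalar setting one exploits $\E e^{\theta \sum Z_i} = \prod \E e^{\theta Z_i}$, but this factorization is lost for non-commuting matrices, and is replaced --- only in a trace sense --- by Lieb's deep operator inequality. Everything else, namely the matrix Chernoff step, the PSD-monotonicity of the logarithm, and the scalar Bernstein moment-generating-function bound, will be either elementary or routine matrix analysis.
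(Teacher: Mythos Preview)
Your proof sketch is correct and follows the canonical route (matrix Laplace transform, Lieb/Tropp subadditivity, scalar Bernstein MGF bound applied spectrally, then the choice $\theta_\star = x/(\sigma^2 + Rx/3)$); the algebra checks out and the exponent indeed simplifies to $-\tfrac{3}{2}\cdot \tfrac{x^2}{3\sigma^2 + Rx}$. Note, however, that the paper does not prove this lemma at all: it simply quotes it from \cite[Theorem~5.4.1]{vershynin2018high}, so there is no paper-proof to compare against --- your sketch is exactly the argument one finds in that reference (and in Tropp's earlier work).
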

Applying the lemma with $M_i = \f{1}{n} v_i v_i^T \chi^i_s$ (recall the
definition of $\chi^i_s$ from \eqref{centeredindicator}) which are centered
symmetric matrices will yield the desired bound on $A_s=\sum_{i=1}^n M_i$ (see
\eqref{eq:gap}). Note that for this application
\begin{align*}
	\sig^2 &= \f{1}{n^2}\norm{\sum_{i=1}^n (v_i v_i^T)^2 \cdot \Var(\chi^i_s)} \\
		   &\leq \f{d}{n} \cdot p_s \norm{\f{1}{n}\sum v_i v_i^T} \\
		   &=\f{d}{n} \cdot p_s \norm{U}.
\end{align*}
Here the first inequality uses  that $\Var(\chi^i_s)=p_s(1-p_s) \leq p_s$, and $(v_i v_i^T)^2 \preceq dv_i v_i^T$
($\preceq$ is the positive semi-definite order) for all $i$, on account
of the uniform bound by hypothesis  $\norm{v_i}_2 \leq \sqrt d$.

To determine $R$, we simply note that since $\Abs{\chi^i_s} \leq 1$, one may bound $\norm{M_i} = \f{1}{n} |\chi^i_s| \norm{v_i
v_i^T} \leq \f dn$, and thus we will take $R = \f dn$. Substituting these quantities into Lemma \ref{lem:bern} we obtain the tail bound
\[
	\P\Rnd{\norm{A_s} \geq x} \leq 2d \cdot \Exp{-\f{3}{2} \cdot \f{x^2}{3 \cdot \f{d p_s \norm{U}}{n} + \f{dx}{n}}}.
\]
A simple algebra reveals that 
\begin{equation}\label{normcontrol}
	\P\Rnd{\norm{A_s} \geq x_m} \leq e^{-m},
\end{equation}
where
\[
	x_m \df \max\Rnd{\sqrt{\f{4dp_s \norm{U}}{n}}\Rnd{\sqrt{\log(2d)} + m}, \f{4d}{3n}\Rnd{\log(2d) + m}}.
\]
The straightforward calculations needed to obtain the above expression is recorded in Lemma \ref{lem:tailsimp} applied with $a = 3 \cdot \f{d p_s \norm{U}}{n}, b = \f{d}{n}, c
= 3/2, C = 2d$ (we will need
$d \geq 2$ to satisfy $\log C \geq 1$).

At this point, to ensure notational simplicity, we will assume $\|U\|\ge
\frac{d}{n}$. If that is not the case, one simply needs to replace all the
occurrences of $\|U\|$ by $\norm{U}\vee \frac{d}{n}$ in the remainder of the
argument which we will refrain from doing. 

The above allows us to redefine $x_m$ to be the following larger quantity, for which \eqref{normcontrol} continues to hold, 
\[
	x_m \df 2 \sqrt\f{d \norm{U}}{n}\Rnd{\log(2d) + m}.
\]
That is, for any $m \geq 0$, we have
\begin{align}
	\label{eq:asbound}
	\norm{A_s} \leq 2\sqrt\f{d \norm{U}}{n} \Rnd{\log(2d) + m}, &\quad \text{with probability $\geq 1 - e^{-m}$},
\end{align}
for any $s > 0$.

\subsection{Bounding the quadratic variation}
Armed with the above estimates,  let us return to bounding the quadratic variation by bounding the middle integral in
\eqref{eq:quadvar}. The integral will be split into two parts. The first part will be the integral over $s \leq \ss$
with $\ss \geq 1$ chosen an appropriate poly-logarithmic in $n$ number. On this interval we will bound $\norm{e_1^T (\G_s - G_s)}_2$ by
$\norm{\G_s - G_s}.$ Via \eqref{eq:asbound}, we will apply the concentration in \eqref{eq:asbound} to bound the latter.
This is done since the integral over $[0,\ss]$ will yield a multiplicative
factor $\ss$ which we can only allow to be sub-polynomial in $n.$  The
remainder of the integral will be
treated via direct first moment arguments used to bound $\norm{e_1^T (\G_s - G_s)}_2^2$. While the computations appear
shortly, it is worth remarking at this point that we can control the second
part of the integral using moment arguments since the vectors $v_i$ have
$L^2$
norms bounded by $\sqrt d$, whereas the probability of any $\tau_i$ exceeding $\ss$ is super-polynomially small.

Before proceeding, for convenience, let us recall that
\begin{align*}
	\G_s &= \sqrt{\f{1}{n} \sum_{i = 1}^n v_i v_i^T \1_{\tau_i > s}},
\end{align*}
where $U = \f{1}{n} \sum_{i = 1}^n v_i v_i^T$ and that $G_s^2 = \E \G_s^2 = p_s U$.

Now implementing the above outlined strategy we get
\begin{align}
	[\Phi^1 - \Psi^1]_t &\leq \int_0^\ss \norm{\G_s - G_s}^2 \d s + \int_\ss^t \norm{e_1^T(\G_s - G_s)}_2^2 \d s \nonumber\\
						&\les \int_0^\ss \norm{A_s} \d s +
						d \int_\ss^t \Rnd{\f 1n \sum_{i = 1}^n \1_{\tau_i > s} + p_s} \d s. \label{eq:quadvar2}
\end{align}
Above we use \eqref{eq:gapbound} for the first term and the following crude bound for the second term:
\begin{align*}
	\norm{e_1^T (\G_s - G_s)}_2^2 &\les \norm{e_1^T \G_s}_2^2 + \norm{e_1^T G_s}_2^2 \\
								  &= e_1^T \G_s^2 e_1 + e_1^T G_s^2 e_1 \\
								  &= e_1^T \Rnd{\f1n\sum_{i = 1}^n v_i v_i^T \1_{\tau_i > s}} e_1 + e_1^T p_s U e_1 \\
								  &= \f1n \sum_{i = 1}^n v_{i, 1}^2 \1_{\tau_i > s} + p_s \f1n \sum_{i =
								  1}^n v_{i, 1}^2\\
								  &\leq d\Rnd{\f1n \sum_{i = 1}^n \1_{\tau_i > s} + p_s},
\end{align*}
where we use the assumption that $\norm{v_i}_\infty \leq \norm{v_i}_2\le \sqrt{d}$ for all $i$ in the last step. Since
\[
	\int_{\ss}^\infty \1_{\tau_i > s} \d s = \max(0, \tau_i - \ss),
\]
the bound \eqref{eq:quadvar2} for $t = \infty$ reduces to
\begin{align}
	\label{eq:quadvar3}
	[\Phi^1 - \Psi^1]_\infty &\les \int_0^\ss \norm{A_s} \d s + \f dn \sum_{i = 1}^n \max(0, \tau_i - \ss) +
	d\int_{\ss}^\infty p_s \d s
\end{align}

We will bound each of the terms separately. Note that even though \eqref{eq:asbound} provides control on $\norm{A_s}$,
it is a one-point bound, which needs to be extended to an integral bound. 

{\bf Bounding $\int_0^\ss \norm{A_s} \d s$. } Given the one-point estimate in \eqref{eq:asbound}, we will
		discretize the integral and apply a suitable union bound along with some regularity estimates to extend
		the bound to the entire interval $[0, \ss]$. The regularity estimate
		will rely on showing that the $\tau_i$ are not too crowded, i.e. no
		small enough interval of times contains more than one $\tau_i.$
To show this, it will be useful to assert that the law
		of $\tau_i$ is sufficiently smooth. While one maybe able to construct
		Skorokhod embeddings so that the corresponding stopping time $\tau$ has
		a reasonably smooth distribution, the requirements for us are rather
		mild, and we will achieve it by slightly perturbing the law of our noise
		variables $a_i$ instead.
		
\noindent		
\textit{Density bound:}		
		Let $a'_i \df a_i + \eps_i$ with $\eps_i = \pm n^{-r}$ uniformly at random (independent
		of $a_i$), with $r$, a large enough constant chosen
		later. Crucially, one may choose a Skorokhod embedding of $\eps_i$, say $\tau^{\eps_i}$ which is continuous (given
		by the first hitting time of $\pm n^{-\eps}$ for Brownian motion), and thus, one may Skorokhod-embed $a'_i$ as
		$\tau'_i = \tau_i + \tau^{\eps_i}$, a continuous random variable with density bounded
		by $O(n^{2r})$. That $\tau^{\eps_i}$ has density bounded by
		$O(n^{-2r})$ is immediate since it has the same distribution as
		$n^{-2r}$ times the first hitting time of $\pm 1$
			for a Brownian motion and the latter has a bounded density.   The same bound transfers over to
			$\tau'_i$ since convolution of two random variables one of whose
			densities is bounded by $O(n^{2r})$, yields a random variable with
			the same bound on the density regardless of the distribution of the
			other random variable.

We next show that indeed the perturbation of the noise variables can be performed without affecting the sought coupling between $a$ and $g.$			
			Since,
		\begin{align}
			\label{eq:mollerr}
			n^{-\f12}\norm{\sum_{i = 1}^n v_i a_i - \sum_{i = 1}^n v_i a'_i}_\infty
			\leq n^{-\f12} \sum_{i = 1}^n \norm{v_i}_\infty \cdot \Abs{\eps_i} \leq n^{-(r - 1)},
		\end{align}
		(since $\norm{v_i}_\infty\le \sqrt d \le \sqrt n$ by hypothesis)
		we may replace $a_i$ by $a'_i$ (and $\tau_i$ by $\tau'_i$), which we do in the sequel, with a
		(deterministic) additional coupling error of at most $n^{-(r - 1)}$.
		
		However, having replaced $a_i$ by $a'_i$, to match variances, we now replace $g_i$ by $g'_i$, where
		$g'_i$ is Gaussian with
		$\Var(g'_i) = \Var(g_i) + \Var(\eps_i) = 1 + n^{-2r}$. 
		Since $\TV{g_i - g'_i} \leq
		O\F{\Var(g'_i) - \Var(g_i)}{\Var(g_i)} = O(n^{-2r})$ (this follows from a straightforward computation,
		but for a concrete reference, see \cite[Corollary 2]{barsov1987estimates}), 
		$g_i$ and $g'_i$ can be
		coupled to be equal with probability $\geq 1 - O(n^{-2r})$. A union bound over all the variables show that
		$\{g_i\}_i$ and $\{g'_i\}_i$ can be coupled such that
		\begin{equation}
			\label{eq:mollerr2}
			S^g_n = S^{g'}_n, \quad \text{with probability $\geq 1 - O(n^{-r})$},
		\end{equation}
		when $r \geq 1$ (we will account for this additional probability error in the final bound).

Having the luxury of the above density bound, let us continue with the discretization partitioning $[0, \ss]$ into $N \leq \ss n^k$ intervals
		$I_1, \ldots, I_N$, each of
		length at most $n^{-k}$ (for some $k \geq 1$ to be chosen later). For a given interval $I_j$,
		\[
			\P(\text{at least two $\tau'_i$ land in $I_j$})  \les n^{2 + 4r - 2k},
		\]
		and by a union bound,
		\[
			\P(\text{there is some interval with at least two $\tau'_i$}) \les N \cdot n^{2 + 4r - 2k} \les \ss \cdot n^{2 + 4r - k}.
		\]
		With a choice of, say, $k = 5r + 3$, the probability is at most $O(n^{-r})$. 

		The bound \eqref{eq:asbound} on $\norm{A_s}$ implies via a union bound that
		with probability $\geq 1 - O(e^{-m})$,
		\begin{align*}
			\sup_{s \in \partial} \norm{A_s} & \les \sqrt\f{d \norm{U}}{n} \Rnd{\log(2d) + \log (N + 1) + m} \overset{d \leq n, \ss \geq 1}{\les} \sqrt{\f{d \norm{U}}{n}} \Rnd{k \log n + \log \ss + m},
		\end{align*}
		where $\partial$ denotes the $N + 1$ boundary points of the intervals $\{I_j\}$.
		To extend this supremum throughout $[0, \ss]$, we will now use that fact that with probability $1 - O(n^{-r})$,
		no interval $I_j$ has two $\tau'_i$s. Let $I_j = [s^-, s^+]$ be one of the intervals. Then for any $s \in [s^-,
		s^+]$,
		\[
			\norm{A_s - A_{s^-}} = \norm{\f1n \sum_{i = 1}^n v_i v_i^T \chi^i_s - \f1n \sum_{i = 1}^n v_i v_i^T
			\chi^i_{s^-}}.
		\]
		At most one $\1_{\tau'_i > s}$ differs from $\1_{\tau'_i > s^{-}}$, so 
		\[
			\norm{A_s - A_{s^-}} \leq \f1n \sup_i \norm{v_i v_i^T} + \Abs{p_{s^-} - p_{s}} \les \f{d}{n} + n^{-r},
		\]
		since by the density bound of $\tau'_i$ stated above,  $p_{s^-} - p_s \leq \P(s^- < \tau'_i \leq s^+) \les n^{2r-k} \les n^{-r}$. Since $r, d \geq 1$, we have
		\[
			\norm{A_s - A_{s^-}} \les \f{d}{n}.
		\]
		
		Since $\norm{U}\ge \frac{d}{n}$ (recall that we have been assuming that since the beginning of this argument for simplicity), with probability $\geq 1 - O(e^{-m} +
		n^{-r})$ we have
		\begin{align}
			\label{eq:term1}
			\int_0^\ss \norm{A_s} \d s & \les \ss \cdot \Rnd{ \sqrt{\f{d\norm{U}}{n}} \Rnd{k \log n + \log \ss + m}}.
		\end{align}
We now move on to the second term in \eqref{eq:quadvar3}.

{\bf Bounding $\f1n \sum_{i = 1}^n \max(0, \tau_i - \ss)$. }
Note that since we replaced $\tau_i$ by $\tau'_i$ in the previous step, the quantity we are interested in is
\[
	\f1n \sum_{i = 1}^n \max(0, \tau'_i - \ss).
\]
The expectation of this quantity is
\[
	\E\max(0, \tau'_i - \ss) \leq \E\max(0, \tau_i - \ss) + \E\tau^{\eps_i} \leq \E[\tau_i \1_{\tau_i \geq \ss}] + O(n^{-2r}).
\]
By Cauchy-Schwarz inequality we have
\begin{align}
	\label{eq:cs1}
	\E[\tau_i \1_{\tau_i \geq \ss}] \leq \sqrt{\E[\tau_i^2] \cdot \E[\1_{\tau_i \geq \ss}]} \les
	\Exp{-\Om(\ss^\beta)},
\end{align}
due to the stretched-exponential tails of $\tau$, see Remark \ref{rem:setails} for the precise definition of 
$\beta$.

	Therefore, by Markov's inequality, with probability $1 - n^{-r}$,
\[
	\f1n \sum_{i = 1}^n \max(\tau'_i - \ss, 0) \les \f{\Exp{-\Om(\ss^\beta)} + O(n^{-2r})}{n^{-r}}.
\]

Let us choose $\ss$ of order $\log^{1/\be} n$ such that $\Exp{-\Om(\ss^\beta)} = O(n^{-2r})$. Then, with probability $1 - n^{-r}$,
\[
	\f1n \sum_{i = 1}^n \max(0, \tau'_i - \ss) \les n^{-r}.
\]

We now arrive at the final term in \eqref{eq:quadvar3}.

\noindent
{\bf Bounding $\int_{\ss}^\infty p_s \d s$. }
One may rewrite this term as (again recalling that $\tau_i$ is now $\tau_i'$)
		\begin{align}
			\int_{\ss}^\infty p_s \d s &= \int_{\ss}^\infty \P(\tau_i' > s) \d s \\
									 &= \E[(\tau'_i - \ss) \cdot \1_{\tau'_i > \ss}]\\
									 &\leq \E[\tau_{i'} \cdot \1_{\tau_{i'} > \ss}]\\
									 &\leq O(n^{-2r}),
		\end{align}
		using \eqref{eq:cs1} and the choice of $\ss$ above.

Combining the bounds established above, we may convert \eqref{eq:quadvar3} to read (still $a'$ replacing $a$ and $g'$ replacing $g$)
\begin{align}
	\label{eq:termcomb}
	[\Phi^1 - \Psi^1]_\infty &\les \ss \cdot \Rnd{\sqrt\f{d\norm{U}}{n} \Rnd{k \log n + \log \ss + m}} + n^{-r+1},
\end{align}
with probability $\geq 1 - O(e^{-m} + n^{-r})$, for every $m \geq 0$ (we used $d n^{-r} \le n^{-r+1}$ above). Recalling our choice $\ss = O(\log^{1/\be} n)$,
and using $m = r \log n$ and that $\norm{U}\ge \f dn$, we
may simplify the bound to
\[
	[\Phi^1 - \Psi^1]_\infty \les \log^{\f1\be} n \cdot \Rnd{\sqrt\f{d\norm{U}}{n} \cdot r \log n},
\]
with probability $\geq 1 - O(n^{-r})$, whenever $r \geq 1$, since $k = 5r + 3$.

In the next lemma (Lemma \ref{lem:quadvartail}) we record the well known result bounding the  value of a martingale in
terms of its quadratic variation. Invoking this we obtain the following bound on $|\Phi^1 - \Psi^1|$:
\[
	\Abs{\Phi^1_\infty - \Psi^1_\infty} \les r \log^{1 + \f1\be} n \cdot {\F{d\norm{U}}{n}^{1/4}},
\]
with probability $\geq 1 - O(n^{-r})$. A union bound over all the $d$ coordinates then produces the same bound
\begin{align*}
	\norm{\Phi_\infty - \Psi_\infty}_\infty &\les r \log^{1 + \f1\be} n \cdot {\F{d\norm{U}}{n}^{1/4}},
\end{align*}
but with probability $\geq 1 - O(n^{-r + 1})$. At this point, recall the coupling error incurred due to the replacement
of the $\{a_i\}$ by $\{a'_i\}$, and of $\{g_i\}$ by $\{g'_i\}$ in \eqref{eq:mollerr} and \eqref{eq:mollerr2}
respectively. Incorporating this error (and returning to $S^a_n$ and $S^g_n$), we have
\[
	\norm{S^a_n - S^g_n}_\infty \les r \log^{1 + \f1\be} n \cdot {\F{d\norm{U}}{n}^{1/4}} + n^{1 - r},
\]
with probability $1 - O(n^{-r + 1}) - O(n^{-r}) = 1 - O(n^{-r + 1})$. 
Choosing $r\ge 4$, the term $n^{1 - r}$ can be shown to be smaller than the first term in the above error bound.
Replacing $r$ by $2r$ finishes the proof.
\qed

\begin{remark}\label{optimalexponent}
Note that we had remarked right after the statement of Theorem \ref{thm:main}
about the sub-optimality of the exponent $1/4$ in the error bound
$\F{d\norm{U}}{n}^{1/4}$ and that we expect it to be $1/2.$ The source of the
$1/4$ stems from the square root error bound in Lemma \ref{lem:sqrtgap}. As
discussed preceding the latter lemma, the bound can be improved to have a
linear dependence of $\norm{Q},$ if the smallest eigenvalue of $P$ is large
enough, which we expect to the case for us. However, we do not pursue this and
getting the optimal error bound is left as an open problem.  
\end{remark}

The following classical result used above quantifies bounds on the tail of a
martingale in terms of its quadratic variation. The straightforward proof is
provided in the appendix.

\begin{lemma}
	\label{lem:quadvartail}Let $B_t$ be a Brownian motion on $[0, \infty)$ equipped with a natural filtration and $V_t$
	be an {adapted} process on $[0, \infty)$. Define $X_t$ via $\d X_t = V_t \d B_t$ with
	$X_0 = 0$. Then, if there exists $M, \eps > 0$ such
	\[
		\P([X]_\infty > M) < \eps,
	\]
	then
	\[
		\P\Rnd{\sup_{t} |X_t| > \sqrt{2M\log(C/\eps)}} < 2\eps,
	\]
	for a universal constant $C$.
\end{lemma}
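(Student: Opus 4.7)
The plan is to stop the martingale at the first time its quadratic variation reaches $M$, so as to isolate the ``bad'' event $\{[X]_\infty > M\}$ (which has probability $< \eps$ by hypothesis) and reduce the rest to a clean sub-Gaussian tail bound for a martingale with bounded quadratic variation. Concretely, I would set $\tau \df \inf\{t \geq 0 : [X]_t \geq M\}$ and $Y_t \df X_{t \wedge \tau}$, so that $Y$ is a continuous local martingale with $[Y]_\infty \leq M$ almost surely, and on $\{[X]_\infty \leq M\}$ the processes $X$ and $Y$ coincide for all $t$.

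To control $\sup_t |Y_t|$, the natural tool is the standard exponential martingale
\[
	Z^\lam_t \df \Exp{\lam Y_t - \tfrac{\lam^2}{2}[Y]_t}, \qquad \lam > 0,
\]
which, thanks to the uniform bound $[Y]_\infty \leq M$, is a bona fide nonnegative martingale with $Z^\lam_0 = 1$ (not merely a local martingale). Since $e^{\lam Y_t} \leq Z^\lam_t \cdot e^{\lam^2 M/2}$, applying Doob's maximal inequality to $Z^\lam$ on $[0, T]$ and letting $T \to \infty$ by monotone convergence gives
\[
	\P\Rnd{\sup_{t \geq 0} Y_t \geq a} \leq \P\Rnd{\sup_{t \geq 0} Z^\lam_t \geq e^{\lam a - \lam^2 M/2}} \leq \Exp{-\lam a + \f{\lam^2 M}{2}}.
\]
Optimizing at $\lam = a/M$ and running the same argument for $-Y$ followed by a union bound yields the two-sided sub-Gaussian tail $\P(\sup_t |Y_t| \geq a) \leq 2 \exp(-a^2/(2M))$.

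To conclude, I would choose $a = \sqrt{2M \log(C/\eps)}$ with $C = 2$, so that $2\exp(-a^2/(2M)) = 2\eps/C = \eps$, and combine with the control on the bad event via union bound:
\[
	\P\Rnd{\sup_t |X_t| > a} \leq \P\Rnd{[X]_\infty > M} + \P\Rnd{\sup_t |Y_t| > a} < \eps + \eps = 2\eps,
\]
using that $X \equiv Y$ on the complement of the first event. I anticipate no real obstacle here; the only technicalities are the true-martingale property of $Z^\lam$ (routine since $[Y]$ is bounded, so Novikov's condition holds trivially) and the monotone passage $T \to \infty$ in Doob's inequality, both standard.
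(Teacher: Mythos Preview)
Your proof is correct, but it takes a different route from the paper. The paper invokes the Dubins--Schwarz theorem to write $X_t = W_{[X]_t}$ for a Brownian motion $W$ (on a possibly extended space), so that on $\{[X]_\infty \leq M\}$ one has $\sup_t |X_t| \leq \sup_{s \leq M} |W_s|$, and then applies the standard Gaussian tail for the Brownian supremum. You instead stop at $\tau$ and use the exponential martingale $\exp(\lambda Y_t - \tfrac{\lambda^2}{2}[Y]_t)$ together with Doob's maximal inequality to derive the sub-Gaussian tail directly. Both arguments land on the same bound $\P(\sup_t |Y_t| \geq a) \leq 2e^{-a^2/(2M)}$ and the same final union-bound step; your approach is slightly more self-contained in that it avoids the time-change machinery and the attendant enlargement of the probability space, while the paper's version is a one-line reduction once Dubins--Schwarz is granted. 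Either is perfectly acceptable here, and in fact your argument makes the constant $C=2$ explicit.
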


\begin{remark}
	\label{rmk:dncounter}
	Note that the bound in Theorem
	\ref{thm:main} is effective only when $d\ll n.$ Thus, one may wonder about the plausibility of such a statement for
	larger values of $d.$  Intuitively it is perhaps clear that the dimension $d$ needs to be smaller than the
	number of samples to lead to a CLT and here we present a quick example with $d=n$ where indeed a
	statement of the kind of Theorem
	\ref{thm:main} cannot possibly hold.  Let $v_1,v_2,\ldots, v_n$ form the columns of a DFT matrix of size $n$, say
	$U.$ Then $U U^\dag=I.$ Now let $g$ and $a$ be vectors of i.i.d. Gaussians and random variables distributed according
	to $\mu$ respectively. Then for any coupling of $g$ and $a,$ note that $\|U(g-a)\|^2_2= \|g-a\|^2_2.$ Hence,
	$\E\|U(g-a)\|^2_2\ge n W^2_2(g,a),$ thereby indicating that the $L^\infty$ norm of $U(g-a)$ cannot be small
	with high probability. 
\end{remark}

\def\tR{\widetilde{R}_{n, \g}}
\def\tRi{\widetilde{R}_{\infty, \g}}
\def\tZa{\tZ_{n, \g, a}}
\def\tZg{\tZ_{n, \g, g}}

We now prove Theorem \ref{thm:gmc} using as input Theorem \ref{thm:main}.

\section{Application to the multiplicative chaos: Proof of Theorem \ref{thm:gmc}}\label{gmcpf}

We start by  briefly recalling the proof strategy outlined in Section \ref{iop} in a bit more detail before presenting formal arguments. Given that we
will essentially follow the steps from \cite{kk}, to keep things brief and avoid repetition, we will mostly focus on
indicating the key steps and highlighting the essential differences from the proof of \cite{kk}.
The interested reader is referred to \cite{kk} for a more detailed exposition.

\bul {\bf Reduction to a hierarchical model.} The first step is to discretize the continuous model of the random
Fourier series corresponding to variables $a$ (and $g$) to a \emph{discrete hierarchical model} (defined in
\cite[Section 2.1]{kk}). This reduces things to a finite dimensional problem. Hierarchical models such as branching random walk have long served as tractable proxies for
more complicated log-correlated fields such as the Gaussian Free Field.  Thus, this discretization step can be viewed as
a counterpart in the analysis of the random Fourier series model. 

 The hierarchical model is also log-correlated  and has a
corresponding multiplicative chaos (again by the results of \cite{junnila2020multiplicative} in the non-Gaussian case), denoted $\muta$ (and
$\mutg$). The effectiveness of this approximation lies in the fact that $\mua$ and $\muta$ are mutually absolutely
continuous, for any $\g \in (0, \sqrt 2)$ (see \cite[Proposition 2.3]{kk}).
We will slightly simplify the hierarchical construction
and present it below, but the key point is that it now suffices to show that $\muta$ and $\mutg$ are mutually absolutely continuous. 

Crucially, the finite dimensional reduction is what will allow us to apply the CLT result Theorem \ref{thm:main} to
compare the hierarchical models for the non-Gaussian variables $a$ and Gaussians $g$.

However, as will be apparent shortly, the number of vertices in the hierarchical model at level $m$ is $2^m$ (with
$\poly(m)$ corrections) corresponding to the partial sum in \eqref{fourier1} for $k\in [2^{m-1}, 2^m)$ which
involves $2^m$ independent random variables. Thus, in the notation of Theorem \ref{thm:main}, $d\approx n \approx
2^m$ which presents a difficulty since the effectiveness of Theorem \ref{thm:main} relies on $d$ being much smaller
than $n$. The key observation in \cite{kk} which goes back to the earlier alluded to property of the fractal nature of
the GMC, is that it suffices to compare the two measures on their effective
supports at scale $m$ whose sizes are in fact much smaller than $2^m$. These
are given by the thick points allowing us
to take $d$ to be polynomially small compared to $n$, which we turn to next.

\bul {\bf Thick points suffice.} The
multiplicative chaos measures are supported on
\emph{$\g$-thick points} of the corresponding processes. Postponing more formal
definitions to later, any point $t$ in the support of $\mua$ must satisfy
$S_{n,a}(t)\approx m \g \log 2$ for all large $m.$
This continues to hold in the hierarchical models as well (this is a general feature of log-correlated processes).

 Further, such ``thick vertices'' are polynomially rare, in the sense that the collection of $\g$-thick vertices at level $m$, i.e.,
\[
	\cT_{m, \g, a} \df \{ v : S_{n, a}(v) \geq m \g \log 2 \},
\]
where $v$ is a vertex at level $m$, has cardinality of the order of $2^{m \Rnd{1 - \f{\g^2}{2}}}$ (see
\eqref{eq:szk} for a bound on a related object) up to
$\poly(m)$ corrections, which follows by a first moment computation.

\bul {\bf Coupling at thick points.} 
Since at level $m,$ the number of thick vertices is polynomially (in $2^m$) smaller than the number of variables which is of order $2^m,$  Theorem
\ref{thm:main} may be invoked to couple the Gaussian and non-Gaussian values so that the gap between $S_{n,a}$ and $S_{n,g}$ is
uniformly small (in fact, summably
so in $m$) on all the thick vertices, with high probability. This is achieved in Lemma \ref{lem:coupling2}. A key
component of this step is the observation that the vectors $v_i$ used in this application are columns of a DFT
matrix of size $2^m \cdot \poly(m)$, and thus the corresponding matrix $U$ may be shown to be sufficiently isotropic (Lemma
\ref{lem:normu}) which is crucial to the effectiveness of Theorem \ref{thm:main}.

\bul {\bf Proof of absolute continuity.} After the coupling is produced, it remains to show that under this
coupling, $\muta$ and $\mutg$ are indeed mutually absolutely continuous.
Informally, the reason behind this is that under the coupling, on the supports
of the chaos measures, $S_{\infty,a}-S_{\infty,g}$ is $O(1).$ The formal
argument for this part is carried out 
in \cite{kk} and this part of their argument is $\g$-independent and hence works for the entire regime $(0,\sqrt 2)$. The 
Radon-Nikodym derivative between $\muta$ and $\mutg$ is constructed by first
considering the densities between the measures constructed using the
hierarchical process at level $m$, which are
piecewise-constant functions and pose no definitional issues. These densities
can then be shown to converge to a limit with the limit then shown to be a
valid density
between $\muta$ and $\mutg$.
Some more details are spelled out after the proof of Lemma \ref{lem:coupling2}, but the reader is encouraged to
refer
to \cite{kk} for the full proof.

\subsection{The hierarchical model}\label{hierarchical1}
We now precisely define the hierarchical model, closely following \cite[Section 2.1]{kk}.

\bul First define a sequence of (almost dyadic) partitions $\cP_n$ of $[0, 1]$ via
\[
	\cP_n \df \Bra{ \f{q}{2^n f(n)} : q = 0, \ldots, 2^n f(n)}, \quad n \geq 1,
\]
where $f(n) = n^4$. The partition $\cP_n$ corresponds to level $n$ of the
hierarchical model, which is ``responsible'' for frequencies $2^{n - 1}, \ldots, 2^n - 1$ of the Fourier series. As in
\cite{kk}, the additional polynomial factor $f(n)$ is necessary to ensure that the meshing is finer than the frequency
scale.

\bul Define the mesh points at level $n \geq 1$, recursively, as
\begin{align}\label{mesh23}
	\cS_n \df \cS_{n - 1} \cup \cP_n, \quad \cS_0 = \{0, 1\},
\end{align}
and let $\tau_n(1) < \tau_n(2) < \ldots < \tau_n(|\cS_n|)$ be the ordering of $\cS_n$, and let $I_n(j)$ be the interval
$[\tau_n(j), \tau_n(j + 1))$, where $j < |\cS_n|$. 

Note that the partition $\{I_{n + 1}(\cdot)\}$ is a refinement of the partition $\{I_n(\cdot)\}$, producing a
\emph{hierarchical structure} consisting of intervals $I_n(j)$ at level $n$, with children $I_{n + 1}(j')$ at level $n +
1$,
such that $I_{n + 1}(j') \sse I_n(j)$. Abstractly, we will refer to nodes of this tree by $v$, and the corresponding
interval as $I(v)$. Denoting the nodes at level $n$ as $\cN_n$, for each node $v \in \cN_n$ we will define $t(v)$ as
the element in $\cP_n$ which is the left endpoint of $I(v)$.

\bul For each node $v \in \cN_n$, we define the contribution per vertex 
\begin{align}\label{block1}
	X_a(v) &= \sum_{k = 2^{n - 1}}^{2^n - 1} \f1{\sqrt{k}} \Rnd{\ao_k \cos(2\pi k t(v)) + \at_k \sin(2\pi k t(v))}, \\
	X_g(v) &= \sum_{k = 2^{n - 1}}^{2^n - 1} \f1{\sqrt{k}} \Rnd{\go_k \cos(2\pi k t(v)) + \gt_k \sin(2\pi k t(v))},
\end{align}
and let the partial sums be defined as
\begin{equation}\label{sumhier}
	\tS_a(v) = \sum_{w \preceq v} X_a(w), \quad \tS_g(v) = \sum_{w \preceq v} X_g(w),
\end{equation}
where $w \preceq v$ denotes that $w$ is an ancestor of $v$ in the tree (note that this is equivalent to $I(v) \sse
I(w)$).

Using these partial sums, one may define a measure $\mutan$ on $[0, 1]$ with a piecewise constant density via
\begin{align}
	\label{eq:mutdef}
	\mutan(\d t) \df \f{e^{\g \tS_a(t)}}{Z_{n, \g, a}} \d t, \quad \tZ_{n, \g, a} \df \E e^{\g \tS_a(t)},
\end{align}
where, by slight abuse of notation,
\begin{align}
	\label{eq:sdef2}
	\tS_a(t) = \tS_a(v), \quad \text{for } t \in I(v).
\end{align}

As discussed earlier, results from \cite{junnila2020multiplicative} show that the measures
$\mutan$ converge weakly a.s. to a measure $\muta$ for which we have the following result.
\begin{lemma}[{\cite[Proposition 2.3]{kk}}]
	\label{lem:hier}
	For any $\g \in (0, \sqrt 2)$, the measures $\mua$ and $\muta$ are mutually absolutely continuous.
\end{lemma}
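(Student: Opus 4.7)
The plan is to exhibit an explicit Radon--Nikodym derivative between $\mua$ and $\muta$ by comparing the prelimiting densities. For each finite $n$, both $\muan$ and $\mutan$ have explicit densities with respect to Lebesgue measure, so their ratio is well-defined:
$$\rho_n(t) := \frac{d\mutan}{d\muan}(t) = \frac{Z_{n,\g,a}}{\tZ_{n,\g,a}} \cdot e^{\g D_n(t)}, \qquad D_n(t) := \tS_a(t) - S_{n,a}(t),$$
where $\tS_a(t)$ is extended from the tree to $[0,1]$ via \eqref{eq:sdef2}. Writing $D_n$ level by level, the contribution from level $m$ measures the deviation of the frequency block $k \in [2^{m-1}, 2^m)$ evaluated at $t$ from its value at the grid point $t(v_m(t))$, where $v_m(t)$ is the level-$m$ ancestor of the cell containing $t$.

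The first step is a uniform variance bound on $D_n$. Since $|t - t(v_m(t))| \leq (2^m m^4)^{-1}$ and the relevant frequencies satisfy $k < 2^m$, a Taylor expansion of $\cos(2\pi kt)$ and $\sin(2\pi kt)$ around $t(v_m(t))$ yields variance of order $m^{-8}$ per level, uniformly in $t$. Summing in $m$ gives $D_n$ a variance of $O(1)$ uniformly in $n$ and $t$, and combined with the stretched-exponential tails from \eqref{eq:acond} one obtains uniform exponential-moment control on $D_n(t)$. A Borel--Cantelli argument then shows that $D_n(t) \to D_\infty(t)$ almost surely for every dyadic-rational $t$, with $D_\infty$ having the same exponential-moment bound.

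The second step is to show $\rho_n(t) \to \rho_\infty(t)$ almost surely with $0 < \rho_\infty < \infty$. The exponential factor converges since $D_n$ does, and the normalizing-constant ratio $Z_{n,\g,a}/\tZ_{n,\g,a}$ converges to a finite positive limit because both sequences are the total masses of martingale measures converging in $L^1$ to strictly positive limits throughout $\g \in (0,\sqrt 2)$ by \cite{junnila2020multiplicative}. The uniform exponential control on $D_\infty$ ensures $\rho_\infty$ is almost surely bounded above and below by (random) positive constants.

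The third step lifts this pointwise statement to the measure identity $\muta = \rho_\infty \, \mua$. Testing against an arbitrary continuous $f$,
$$\int f\, d\mutan \;=\; \int f\, \rho_n\, d\muan,$$
the left-hand side converges a.s.\ to $\int f\, d\muta$ by the hierarchical chaos construction, and one needs the right-hand side to converge to $\int f\, \rho_\infty\, d\mua$. This is the main obstacle: combining pointwise convergence of $\rho_n$ with the weak almost-sure convergence $\muan \to \mua$ requires a uniform-integrability argument. In the Gaussian case Kahane's convexity inequality would furnish such a bound directly, but in the non-Gaussian setting that tool is unavailable, and the bound must instead be extracted from the exponential moments of $D_n$ together with moment estimates on $\muan$ from \cite{junnila2020multiplicative}. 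Once this step is carried out, strict positivity of $\rho_\infty$ on the support of $\mua$ immediately yields the reverse absolute continuity $\mua \ll \muta$, completing the proof.
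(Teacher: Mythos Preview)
The paper does not prove this lemma; it is quoted directly from \cite[Proposition~2.3]{kk} and used as a black box, so there is no in-paper argument to compare against. Your outline follows the natural strategy, and presumably that of \cite{kk}: exploit the $f(n)=n^4$ oversampling so that the level-$m$ discrepancy between $S$ and $\tS$ has variance of order $m^{-8}$, sum over $m$, and identify the exponential of the limit as the Radon--Nikodym derivative.

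Two concrete points on the write-up. First, there is an indexing mismatch: $\muan$ uses frequencies $1,\dots,n$ while $\mutan$ at hierarchy level $n$ uses frequencies $1,\dots,2^n-1$, so $\rho_n$ as you define it is not a ratio of comparable objects; you want $\mu_{2^n-1,\gamma,a}$ in the denominator. Second, and more substantively, the assertion that uniform exponential moments on $D_\infty(t)$ force $\rho_\infty$ to be almost surely bounded above and below by positive constants is unjustified: a uniform-in-$t$ bound on $\E e^{\lambda D_\infty(t)}$ says nothing about $\sup_t|D_\infty(t)|$, and a crude deterministic bound on the level-$m$ contribution is of order $2^{m/2}m^{-4}$, which diverges. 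Your Borel--Cantelli step also only delivers convergence at countably many $t$, which is not yet an a.e.\ statement with respect to the (random, singular) chaos measure $\mua$. Fortunately mutual absolute continuity does not require $\rho_\infty$ to be bounded; it suffices that $D_\infty(t)$ be finite $\mua$-a.e.\ and that the uniform-integrability passage to the limit---which you correctly flag as the crux---goes through. That passage is where the non-Gaussian case is genuinely delicate and is exactly why this paper defers to \cite{kk} for the details rather than reproducing them.
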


In the sequel, we will therefore focus on proving the following, which, combined with Lemma \ref{lem:hier} will
complete the proof of Theorem \ref{thm:gmc}.
\begin{lemma}[{Analog of \cite[Proposition 2.7]{kk}}]
	\label{lem:hier2}
	There is a coupling $\Bra{\ao_k, \at_k}_{k \geq 1}$ and $\Bra{\go_k, \gt_k}_{k \geq 1}$ such that
	$\muta$ and $\mutg$ are mutually absolutely continuous.
\end{lemma}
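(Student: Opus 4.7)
The plan is to carry out the hierarchical coupling outlined in Section \ref{iop}, with Theorem \ref{thm:main} as the principal new ingredient. Since the contribution $X_a(v) - X_g(v)$ at a level-$m$ vertex $v \in \cN_m$ depends only on the block of variables $\{\ao_k, \at_k\}_{k = 2^{m-1}}^{2^m - 1}$, I would construct the couplings across distinct levels $m$ independently. Within a fixed level $m$, viewing $(X_a(v))_{v \in \cN_m}$ as a vector of the form $\f{1}{\sqrt n} \sum_i a_i v_i$ with $n \asymp 2^m$ and each $v_i$ a column of a trigonometric/DFT-style matrix evaluated on the mesh $\cS_m$ brings the problem squarely into the setting of Theorem \ref{thm:main}.

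The key difficulty is dimensional: the ambient dimension $|\cN_m|$ is comparable to $n$, so the CLT is ineffective applied naively. To reduce it, I restrict attention to the $\g$-thick vertices
\begin{align*}
\cT_{m, \g, a} \df \Bra{v \in \cN_m : \tS_a(v) \geq m \g \log 2},
\end{align*}
since (a standard feature of log-correlated fields) the limiting chaos measures are carried by thick points. A first-moment computation gives $|\cT_{m, \g, a}| \les 2^{m(1 - \g^2/2)} \poly(m)$ with very high probability, so projecting onto the $d = |\cT_{m, \g, a}|$ coordinates indexed by thick vertices makes $d/n \asymp 2^{-m \g^2/2} \poly(m)$. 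Theorem \ref{thm:main} then delivers, for every $r \geq 1$ and all $m$,
\begin{align*}
\sup_{v \in \cT_{m, \g, a}} |X_a(v) - X_g(v)| \les \poly(m) \cdot \Rnd{d \norm{U}/n}^{1/4}
\end{align*}
with probability at least $1 - 2^{-rm}$. Provided $\norm{U}$ is only polylogarithmic in $n$, the RHS is summable in $m$, and Borel--Cantelli yields $\sup_{v \in \cT_{m, \g, a}} |\tS_a(v) - \tS_g(v)| = O(1)$ under the coupling, uniformly across all scales.

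The polylog bound on $\norm{U}$ is the main obstacle, since the worst-case $\norm{U} \leq d$ only gives summable error for $\g > 1$, reproducing the threshold of \cite{kk}. To upgrade it, I would observe that $U = \f{1}{n} \sum_i v_i v_i^T$ is essentially a principal submatrix of a DFT-Gram matrix evaluated at the level-$m$ mesh. Orthogonality of DFT rows, combined with the mesh-refinement factor $f(n) = n^4$ from \eqref{mesh23} ensuring that mesh spacing is finer than the relevant frequency scale, should pin $\norm{U}$ down to $\poly(m)$; this is the promised Lemma \ref{lem:normu}. A mild bookkeeping issue is that the thick-point sets $\cT_{m, \g, a}$ and $\cT_{m, \g, g}$ need not coincide under the coupling, which I would handle by working with a slightly relaxed thickness threshold absorbing the eventual $O(1)$ gap between $\tS_a$ and $\tS_g$.

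Once the coupling is in place and $|\tS_a - \tS_g| = O(1)$ on thick vertices, I would invoke the $\g$-independent Radon--Nikodym argument of \cite{kk}: the prelimiting density $\d \mutan / \d \mutgn$ restricted to a thick vertex $v$ equals $e^{\g(\tS_a(v) - \tS_g(v))} \cdot \tZ_{n, \g, g}/\tZ_{n, \g, a}$. Boundedness of the exponent on thick supports, together with $L^1$-convergence of the normalizing multiplicative martingales (standard for subcritical GMC), gives uniform integrability of this density sequence; its $L^1$ limit is a valid Radon--Nikodym derivative between $\muta$ and $\mutg$, establishing mutual absolute continuity.
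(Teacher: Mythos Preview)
Your proposal is correct and follows essentially the same route as the paper: hierarchical level-by-level coupling via Theorem \ref{thm:main} restricted to thick vertices, the polylog bound on $\norm{U}$ from the DFT structure (Lemma \ref{lem:normu}), summability across scales, and then the $\g$-independent Radon--Nikodym argument from \cite{kk}.

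One technical refinement worth flagging: the set on which you apply Theorem \ref{thm:main} at level $m$ must be measurable with respect to the variables from levels $<m$, since the level-$m$ coupling is constructed conditionally on those. Your set $\cT_{m,\g,a}=\{v\in\cN_m:\tS_a(v)\ge m\g\log 2\}$ depends on $X_a(v)$ and hence on the level-$m$ block itself, creating a circularity that your ``relaxed threshold'' remark does not quite address. The paper resolves this by coupling on $\cK_{m,\g}=\cK_{m,\g,a}\cup\cK_{m,\g,g}$, the \emph{children of $(\g-\del)$-thick parents} at level $m-1$; this set is determined before the level-$m$ variables are revealed, has the same size $2^{m(1-(\g-\del)^2/2)}\poly(m)$ up to the branching factor, and the $\del$-slack also absorbs the discrepancy between the $a$- and $g$-thick sets you mention.
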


A natural approach is to construct this coupling hierarchically,
i.e., at each step $n$, to couple $\Bra{\ao_k, \at_k}_{k = 2^{n - 1}}^{2^n - 1}$ with $\Bra{\go_k, \gt_k}_{k
= 2^{n - 1}}^{2^n - 1}$, such that $\tS_a(v)$ and $\tS_g(v)$ are close for all $v \in \cN_n$.

However, as alluded to earlier, the number of leaves at level $n$ is $O(2^n)$ (up to $\poly(n)$ corrections), while the
number of random variables involved in \eqref{block1} is $O(2^n)$ which by the
discussion in Remark \ref{rmk:dncounter} poses a serious obstruction to the
existence of such a coupling.
To circumvent this
difficulty, the key
observation (as already noted in \cite{kk}) is that it suffices to construct a
coupling such that $\tS_a(v)$ and $\tS_g(v)$ are close at vertices $v$ which
are thick points. More precisely, to ensure that the thickness of a vertex only
uses information from past scales, we will consider the coupling at a vertex
$v$ only if their parent, denoted $\pi(v)$ is $(\g - \del)$-thick (in either
$S_a$ or $S_g$). Summarizing,  we will attempt to couple at $S_a(v)$ and
$S_g(v)$ if and only if $\pi(v) \in \cT_{n - 1, \g, a} \cup \cT_{n - 1, \g,
g}$ where $\cT_{\ell, \g, a}$ is the collection of $(\g - \del)$-thick points at level $\ell$ for $\tS_a$:
\[
	\cT_{\ell, \g, a} \df \{ v \in \cN_\ell : S_a(v) \geq \ell \Rnd{(\g - \del) \log 2} \}.
\]
Let us refer to these ``children of thick parents'' as $\cK_{n, \g, a}$. 
We will also set 
\begin{equation}\label{thickpointset}
\cKn = \cKa \cup \cKg.
\end{equation}
As in the Gaussian case, which was known earlier, see e.g., \cite{rhodes2014gaussian},  it was shown in \cite[Lemma 3.1]{kk} that the multiplicative chaos measure $\muta$ is
supported on the set of points which are \emph{$(\g - \del)$-thick}, i.e., all sufficiently deep
ancestors are $(\g - \del)$-thick (while sharper results are expected to be true and are known in the Gaussian case, for our purposes a small $\delta$ slack will suffice). 

To quickly motivate the $\log 2$ factor, note that for any $v \in \cN_n$,
the contribution to the variance of $\S_a(v)$ from the ancestor $w \in \cN_\ell$ at any level $\ell \leq n$ is:
\begin{align}
	\label{eq:svar}
	\sum_{k = 2^{\ell - 1}}^{2^\ell - 1} \f1{k} \Rnd{\cos^2(2\pi kt) + \sin^2(2\pi kt)} = \log 2 + O(2^{-\ell}),
\end{align}
and thus, $\Var(S_a(v)) = n \log 2 + O(1)$. It may be helpful to the reader to
note that this suggests that the setting may be compared to a branching random
walk where each edge
of the binary tree has an associated standard Gaussian variable of variance
$\log 2.$ Thick points are those whose values are a constant multiple of the
maximum value.

We now arrive at our main coupling result for any subset $\cK \sse \cN_n$ (throughout, one should think of $\cK$
as the set of vertices at level $n$ with $(\g-\del)$-thick parents, as discussed above). The proof is a direct application of our CLT in Theorem \ref{thm:main}.
A related statement appeared in  \cite[Lemma 2.5]{kk},
which stipulated essentially that $|\cK| \le 2^{\frac{n}{2}}.$ 
As already alluded to, this was proven as an immediate consequence of a high
dimensional CLT result which goes back to \cite{y}. The latter result was
covariance agnostic and hence our improvement allowing us to take $|\cK|$ as
large as $2^n$ is crucially reliant on showing that the relevant covariance
matrix has small spectral norm.

\begin{lemma}[Coupling]
	\label{lem:coupling}
	Fix $n \geq 1$. For any subset $\cK \sse \cN_n$ such that $|\cK| \leq 2^n$, there is a coupling of
	$\Bra{(\ao_k, \at_k)}_{k = 2^{n - 1}}^{2^n - 1}$ and $\Bra{(\go_k, \gt_k)}_{k = 2^{n - 1}}^{2^n - 1}$ such that,
	\[
	\P\Rnd{\sup_{v \in \cK} \Abs{X_a(v) - X_g(v)} > \poly(n) \F{\Abs{\cK}}{2^n}^{1/4}} \les 2^{-n},
	\]
	where $\poly(n)$ denotes a fixed polynomial in $n$.
\end{lemma}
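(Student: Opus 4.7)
The plan is to recognize Lemma \ref{lem:coupling} as a direct application of Theorem \ref{thm:main}. First I would pool the $N := 2 \cdot 2^{n-1} = 2^n$ scalar random variables $\Bra{(\ao_k, \at_k)}_{k = 2^{n-1}}^{2^n - 1}$ into a single sequence $b_1, \ldots, b_N$ (and do the same for the Gaussians), so that \eqref{block1} reads
\[
(X_a(v))_{v \in \cK} \;=\; \f{1}{\sqrt N} \sum_{i=1}^N b_i v_i,
\]
where each $v_i \in \R^{|\cK|}$ has entries of the form $\f{\sqrt N}{\sqrt k} \cos(2\pi k t(v))$ or $\f{\sqrt N}{\sqrt k} \sin(2\pi k t(v))$ for the frequency $k \in [2^{n-1}, 2^n)$ associated with index $i$. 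Since $k \geq N/2$, we get $\norm{v_i}_\infty \leq \sqrt 2$ uniformly, so after absorbing the constant the setup fits Theorem \ref{thm:main} with ambient dimension $N$ and target dimension $d = |\cK| \leq N$.

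Next I would compute the matrix $U = \f1N \sum_i v_i v_i^T$. Using $\cos A \cos B + \sin A \sin B = \cos(A - B)$, the real and imaginary contributions collapse to
\[
U_{v, v'} \;=\; \sum_{k = 2^{n-1}}^{2^n - 1} \f{1}{k} \cos\bigl(2\pi k(t(v) - t(v'))\bigr),
\]
so $U$ is a principal submatrix of a weighted DFT-style Gram matrix indexed by the refined mesh $\cP_n$ of spacing $(2^n n^4)^{-1}$. The crucial input is the spectral bound $\norm{U} = \poly(n)$ furnished by Lemma \ref{lem:normu}. Without this polylogarithmic isotropy, the error in Theorem \ref{thm:main} would degenerate near $|\cK| \approx 2^n$ and only the narrower regime handled by \cite{kk} would survive.

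Having secured this isotropy, I would apply Theorem \ref{thm:main} with $r$ a fixed constant (say $r = 2$) large enough that the failure probability $O(N^{-r})$ is $\les 2^{-n}$. Since $\be$ depends only on $\mu$, the factor $\log^{1+1/\be} N$ is of order $\poly(n)$; and since $\norm U \gtrsim 1$ (the diagonal entries equal $\log 2 + O(2^{-n})$) with $d/N \leq 1$, the $(d/N)^{1/2}$ branch is dominated by the $(d\norm U/N)^{1/4}$ branch. Combining these yields
\[
\sup_{v \in \cK} \Abs{X_a(v) - X_g(v)} \;\les\; \poly(n) \cdot \F{|\cK|}{2^n}^{1/4}
\]
with probability at least $1 - O(2^{-n})$, which is exactly the claim.

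The main obstacle is the spectral estimate $\norm U = \poly(n)$ from Lemma \ref{lem:normu}; everything else is notational. The substance there is to show that the truncated cosine-type Gram matrix on the oversampled grid (with its $n^4$ refinement) has its off-diagonal mass dispersed enough to keep $\norm U$ sub-polynomial, even though the underlying grid has size $2^n \cdot n^4$; this is where the DFT structure of the Fourier series and the oversampling factor $f(n) = n^4$ enter decisively. Once that bound is in hand, the lemma follows as a direct invocation of Theorem \ref{thm:main}.
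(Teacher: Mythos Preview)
Your proposal is correct and follows essentially the same route as the paper: rewrite $X_a$ as a normalized sum of $2^n$ rank-one increments over $\R^{|\cK|}$, verify the $\ell^\infty$ bound on the vectors, invoke Lemma \ref{lem:normu} (via the trig/DFT identity) to get $\norm{U} = \poly(n)$, and then apply Theorem \ref{thm:main}. The only cosmetic difference is that the paper first dominates $U$ by the unweighted matrix $V$ (dropping the $1/k$ factors) before applying Lemma \ref{lem:normu}, and uses $r=1$ rather than your $r=2$; neither affects the argument.
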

\begin{proof}
	\def\kl{{k^{-}}}
	\def\ku{{k^{+}}}
	\def\po{\psi^{(1)}}
	\def\pt{\psi^{(2)}}
	\def\fo{\phi^{(1)}}
	\def\ft{\phi^{(2)}}

	We will first rephrase the problem in terms of Theorem \ref{thm:main}. Let us write $\kl = 2^{n - 1}, \ku = 2^n
	- 1$ (the lower
	and upper bounds on $k$, respectively) and define
	the random vectors $\po_k, \pt_k \in
	\R^\cK$ as
	\begin{align}\label{vec234}
		\po_k \df \sqrt{\f{\kl}{k}} \fo_k, \quad & \pt_k \df \sqrt{\f{\kl}{k}} \ft_k, \quad \text{ where $\fo_k, \ft_k
		\in \R^\cK$ satisfy} \\
		\Rnd{\fo_k}_v \df \cos(2\pi k t(v)), \quad & \Rnd{\ft_k}_v \df \sin(2\pi k t(v)), \quad v \in \cK, \quad k \in [\kl,\ku].	\end{align}
	With this notation,
	\[
		X_a =\frac{1}{\sqrt{k^-}} \sum_{k = \kl}^\ku \ao_k \po_k + \at_k \pt_k,
	\]
	where we think of $X_a$ as a vector  in $\R^\cK$ mapping $v \to X_a(v)$. A similar expression also holds for
	$X_g$.

	Note that by definition, since $k^-\le k $, we have $\norm{\po_k}_\infty,
	\norm{\pt_k}_\infty \leq 1$ and hence their $L^2$ norms bounded by $\sqrt{|\cK|}$. Since the number of random variables is $2
	\cdot (\ku - \kl + 1) = 2^n= 2k^-$,
	the conditions of Theorem \ref{thm:main} are satisfied by the random vector $\frac{X_a}{\sqrt{2}}.$ The corresponding matrix $U$ is then
	\[
		U = 2^{-n} \sum_{k = \kl}^\ku \po_k \Rnd{\po_k}^T + \pt_k \Rnd{\pt_k}^T.
	\]
	However, since $\kl \leq k$, we have that $U \preceq V$ where
	\[
		V = 2^{-n+1} \sum_{k = \kl}^\ku \fo_k \Rnd{\fo_k}^T + \ft_k \Rnd{\ft_k}^T.
	\]
	Consequently, $\norm{U} \leq \norm{V}$, and it suffices to bound $\norm{V}$. At this point we will crucially use
	the fact that the matrix $V$ bears a strong resemblance to a DFT matrix allowing us to control its spectral norm
	via the upcoming lemma. To compactify the expression for $V$, note that if we define complex vectors
	$\zeta_k \in \bC^\cK$ as
	\begin{equation}\label{matnot12}
		(\zeta_k)_v \df \exp(2\pi \i k t(v)), \quad v \in \cK, \i = \sqrt{-1},
	\end{equation}
	then for any $v, w \in \cK$ we have
	\begin{align*}
		\Re{\zeta_k \zeta^\dag_k}_{vw} &= \Re{\Exp{2\pi \i k t(v) - 2\pi \i k t(w)}} \\
									   &= \cos(2\pi k t(v) - 2\pi k t(w)) \\
									   &= \cos(2\pi k t(v)) \cos(2\pi k t(w)) + \sin(2\pi k t(v)) \sin(2\pi k t(w)) \\
									   &= \Rnd{\fo_k \Rnd{\fo_k}^T}_{vw} + \Rnd{\ft_k \Rnd{\ft_k}^T}_{vw}.
	\end{align*}
	Thus, $\Re{\zeta_k \zeta^\dag_k} = \fo_k \Rnd{\fo_k}^T + \ft_k \Rnd{\ft_k}^T$, and hence,
\begin{equation}\label{vmat}
		V = \Re{2^{-n} \sum_{k = \kl}^\ku \zeta_k \zeta^\dag_k}.
\end{equation}
	Applying the following Lemma \ref{lem:normu}  we obtain
	\[
		\norm{U} \leq \norm{V} \les n^{10}.
	\]
	Therefore, by Theorem \ref{thm:main} with $r = 1$ and $\al = 1$ (and thus $\be = 1/3$), we have that
	\[
		\sup_{v \in \cK} \Abs{X_a(v) - X_g(v)} \les
			\log^{4/3} (2^n) \Rnd{\F{\Abs{\cK} n^{10}}{2^n}^{1/4} +
			\F{\Abs{\cK}}{2^n}^{1/2}} \les \poly(n) \F{\Abs{\cK}}{2^n}^{1/4},
	\]
	with probability $\geq 1 - O(2^{-n})$.
\end{proof}

\subsection{Spectral norm bounds using the Fourier structure}
\def\dag{\dagger}
\begin{lemma}
	\label{lem:normu}
	For any subset $\cK \sse \cN_n$ such that $|\cK| \leq 2^n$ as in Lemma \ref{lem:coupling}, for $V$ as in \eqref{vmat},
 $$\norm{V} \les n^{10}.$$
\end{lemma}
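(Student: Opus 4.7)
My plan is to bound $\norm{V}$ by combining the variational characterization of the operator norm with Parseval's identity applied level-by-level over the hierarchical mesh. Since $V = \Re{W}$ where $W \df 2^{-n} \sum_{k = 2^{n-1}}^{2^n - 1} \zeta_k \zeta_k^\dagger$ is a (Hermitian, positive semidefinite) Gram matrix, one has
\[
	\norm{V} \leq \norm{W} = 2^{-n} \sup_{\norm{u} = 1} \sum_{k = 2^{n-1}}^{2^n - 1} \Abs{P(k)}^2, \quad P(k) \df \sum_{v \in \cK} \bar u_v\, e^{2\pi \i k t(v)}.
\]
Thus it suffices to show $\sup_{\norm{u} = 1} \sum_k \Abs{P(k)}^2 \les 2^n \poly(n)$.

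Next, I would partition $\cK$ according to the level at which $t(v)$ first appears in the mesh: set $\lambda(v) \df \min\{j \geq 1 : t(v) \in \cP_j\}$, $\cK_j \df \{v \in \cK : \lambda(v) = j\}$, and $M_j \df 2^j f(j) = 2^j j^4$, so that $\cK = \bigsqcup_{j = 1}^n \cK_j$ and $t(v) \in \f{1}{M_j}\bZ$ for every $v \in \cK_j$. Writing $P(k) = \sum_{j=1}^n P_j(k)$ where $P_j(k) \df \sum_{v \in \cK_j} \bar u_v\, e^{2\pi \i k t(v)}$, the key observation is that each $P_j$ is $M_j$-periodic in $k$. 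Since $v \mapsto t(v)$ is injective on $\cN_n$ (distinct nodes correspond to distinct mesh intervals), Parseval's identity on $\bZ/M_j\bZ$ gives
\[
	\sum_{k = 0}^{M_j - 1} \Abs{P_j(k)}^2 = M_j \norm{u|_{\cK_j}}^2.
\]
As $[2^{n-1}, 2^n - 1]$ has length $2^{n-1}$ and thus covers at most $\lceil 2^{n-1}/M_j \rceil$ periods, and $M_j \le M_n = 2^n n^4$ for $j \le n$,
\[
	\sum_{k = 2^{n-1}}^{2^n - 1} \Abs{P_j(k)}^2 \leq (2^{n-1} + M_j)\norm{u|_{\cK_j}}^2 \les 2^n n^4 \norm{u|_{\cK_j}}^2.
\]

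Finally, applying the Cauchy--Schwarz bound $\Abs{P(k)}^2 \leq n \sum_{j = 1}^n \Abs{P_j(k)}^2$ together with the partition $\cK = \bigsqcup_j \cK_j$,
\[
	\sum_{k = 2^{n-1}}^{2^n - 1} \Abs{P(k)}^2 \les n \cdot 2^n n^4 \norm{u}^2 = 2^n n^5 \norm{u}^2,
\]
which, after dividing by $2^n$, yields $\norm{V} \le \norm{W} \les n^5 \les n^{10}$. The only routine verifications are the definition of the mesh $\cS_n = \bigcup_j \cP_j$, the injectivity of $v \mapsto t(v)$, and the $M_j$-periodicity of $P_j$ in $k$ (immediate from $t(v) \in \f{1}{M_j}\bZ$ on $\cK_j$). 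The main strength of this strategy over a single uniform-grid embedding is that it avoids the common denominator $\mathrm{lcm}(M_1, \ldots, M_n)$, which can be exponential in $n$; working level-by-level confines the effective denominator at each step to $M_j \le 2^n \poly(n)$, which is exactly what is needed for the Parseval bound to cancel the factor $2^{-n}$.
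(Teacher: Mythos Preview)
Your proof is correct and follows a genuinely different (and cleaner) route than the paper's.

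Both arguments partition the mesh points by the level $j$ at which they first appear. The paper, however, further refines each level into residue classes $r \bmod f(j)$, so that within each piece the differences $t_v - t_w$ have purely dyadic denominator $2^j$; this allows the Gram matrix $\sum_k v_{k,j,r}\, v_{k,j,r}^\dagger$ to be computed \emph{exactly} via geometric sums of roots of unity, yielding $2^n$ times a coordinate projection. The price is a double index $(\ell,r)$ with $\sum_\ell f(\ell) \les n f(n) = n^5$ parts, and the Cauchy--Schwarz over ordered pairs of parts produces the factor $(n f(n))^2 = n^{10}$.

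You skip the residue refinement and apply Parseval directly on $\bZ/M_j\bZ$ with $M_j = 2^j f(j)$, using only that distinct $v \in \cK_j$ give distinct residues $M_j t(v) \in \{0,\dots,M_j-1\}$. Your single-index Cauchy--Schwarz costs only a factor of $n$, and the periodicity bound $\sum_{k} |P_j(k)|^2 \le (2^{n-1}+M_j)\,\norm{u|_{\cK_j}}^2 \les 2^n n^4\,\norm{u|_{\cK_j}}^2$ handles both small $j$ (many periods in $[2^{n-1},2^n-1]$) and $j=n$ (less than one period) uniformly. The net result is $\norm{V} \les n^5$, sharper than the paper's $n^{10}$, though either is enough for the application. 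The paper's decomposition has the minor advantage of identifying each diagonal block of the Gram matrix exactly as a scalar multiple of the identity, which could matter if one also wanted spectral lower bounds; for the operator-norm upper bound your argument is more direct.
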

\begin{proof}
Let $f(n)=n^4$ as in the hierarchical construction.
Note that any $\cK$ in the hypothesis corresponds to points  
 $0 \le t_1 < t_2 < \ldots < t_m \le 1$  such that for each $i$, there is an $\ell \leq n$
	with $t_i \cdot 2^\ell \cdot f(\ell) \in \Z$. {Note that
	since all the $t_i$s are distinct, we have $m \leq \sum_{i = 1}^n 2^i \cdot f(i) = 2^{n + 1} \cdot f(n)$}.
Since the vectors $\zeta_k$ in \eqref{matnot12} are indexed by $k\in [k^-,k^+],$ to simplify the index set,
define the vectors $v_0, \ldots, v_{2^{n-1} - 1} \in \mathbb{C}^m$ by $v_k=\zeta_{k'}$ where $k' = k + 2^{n-1}.$ 

For notational convenience, we will replace $n$ by $n+1$ so that there are $2^{n}$ vectors in total and not
$2^{n-1}$ and proceed to bound the spectral norm of 
		\[
		W = \Re{\f{1}{2^n}\sum_{k = 0}^{2^n - 1} v_k v_k^\dag}.
	\] 

It will be convenient to partition the points $t_i$ into layers and analyze the
contributions of each part to the spectral norm. The partition will involve two
index parameters. One corresponding to which level of the hierarchical tree the
point first appeared. The second corresponds to the shift at that level needed
to make it a dyadic point. Note that each level $\ell$ introduces $2^\ell
\ell^4$ many new points. Thus, there are at most $\ell^4$ possible shift values
$r$ for points appearing first at level $\ell$.

	For each $\ell \leq n$, define
	\[
		J_\ell = \{ 1 \leq j \leq m : \ell \text{ is smallest such that } t_j \cdot 2^\ell \cdot f(\ell) \in \Z\},
	\]
	and then partition $J_\ell = \cup_{r = 0}^{f(\ell) - 1} L_\ell^r$ via
	\[
		L_\ell^r = \{ j \in J_\ell : (t_j \cdot 2^\ell \cdot f(\ell))\mod f(\ell) = r\}, \quad 0 \leq r < f(\ell).
	\]
	One may partition each vector $v_k$ as
	\[
		v_k = \sum_{\ell, r} v_{k, \ell, r},
	\]
	where $v_{k, \ell, r} \in \bC^{m}$ inherits the values of $v_k$ on the indices in $L_\ell^r$ (with other
	entries set to $0$).
	Then,
	\begin{align*}
		2^n W &= \Re{\sum_{k, \ell, r, \ell', r'} v_{k, \ell, r} v_{k, \ell', r'}^\dag} \\
			  &= \sum_{\ell, r, \ell', r'} \Re{\sum_k v_{k,\ell,r} v_{k,\ell',r'}^\dag}
	\end{align*}
	For any vector $\psi \in \R^m$ the quadratic form is then
	\begin{align}
		2^n \cdot \psi^T W \psi &= \sum_{\ell, r, \ell', r'} \sum_k \psi^T \Re{v_{k, \ell, r} v_{k, \ell', r'}^\dag}
		\psi \\
								& {\les} \sum_{\ell, r, \ell', r'} \sum_k \Abs{\Inn{\psi, v_{k, \ell, r}}} \cdot \Abs{\Inn{\psi,
								v_{k, \ell', r'}}}\\
								&\leq (n f(n))^2 \cdot \sup_{\ell, r, \ell', r'} \sum_k\Abs{\Inn{\psi, v_{k, \ell, r}}}
								\cdot \Abs{\Inn{\psi, v_{k, \ell',
								r'}}}  \\
								&\les (n f(n))^2 \cdot \sup_{\ell, r, \ell', r'} \sum_k \Abs{\Inn{\psi, v_{k, \ell, r}}}^2 +
								\Abs{\Inn{\psi, v_{k, \ell', r'}}}^2 \\
								&\les (n f(n))^2 \cdot \sup_{\ell, r} \sum_k \Abs{\Inn{\psi, v_{k, \ell, r}}}^2
								\label{eq:normua}
	\end{align}
	where in the first inequality we use the fact that if $v, w \in \mathbb{C}^m$, then
	\begin{align*}
		\psi^T \Re{v w^\dag} \psi &= \psi^\dag \F{v w^\dag + w v^\dag}{2} \psi \\
								  &= \Inn{\psi, v}\bar{\Inn{\psi, w}} + \Inn{\psi, w}\bar{\Inn{\psi, v}} \\
								  &\les \Re{\Inn{\psi, v}\bar{\Inn{\psi, w}}} \\
								  &\leq \Abs{\Inn{\psi, v}} \cdot \Abs{\Inn{\psi, w}}
	\end{align*}
	since $\Re{a\bar{b} }\leq |a|\cdot|b|$ for complex $a, b$. For a fixed $\ell, r$, 
	\begin{align*}
		\sum_k \Abs{\Inn{\psi, v_{k, \ell, r}}}^2 &= \psi^T \Rnd{\sum_k v_{k, \ell, r} v_{k, \ell, r}^\dag} \psi
	\end{align*}
	To finish the proof, in light of \eqref{eq:normua}, it will thus suffice to show that 
	\begin{align*}
		\norm{\sum_{k} v_{k, \ell, r} v_{k, \ell, r}^\dag} \leq 2^n.
	\end{align*}
	Observe that any entry $(j, j')$ of the matrix above is $0$ if either $j$ or $j'$ is not in $L_\ell^r$. Thus,
	fix $j, j' \in L_\ell^r$, and note that
	\begin{align}
		\Rnd{\sum_k v_{k, \ell, r} v_{k, \ell, r}^\dag}_{jj'} &= \sum_k (v_{k, \ell, r})_j \bar{(v_{k, \ell, r})_{j'}}
		\\
															  &= \sum_k \Exp{2\pi \i \cdot k' \cdot (t_j - t_{j'})}
															  \label{eq:normub},
	\end{align}
	where recall $k' = k + 2^n$.
	Since $j, j' \in L_\ell^r$, we have
	\[
		(t_j - t_{j'}) \cdot 2^\ell \cdot f(\ell)  \mod f(\ell) = 0.
	\]
	Thus, $(t_j - t_{j'}) \cdot 2^\ell \eqcolon q \in \Z$. Then \eqref{eq:normub} is
	\begin{align*}
		\sum_{k = 0}^{2^n - 1} \Exp{2\pi \i \cdot (k + 2^n) \cdot \f{q}{2^\ell}} &\overset{\text{since } \ell \leq n}{=} 
		\sum_{k = 0}^{2^n - 1} \Exp{2\pi \i
		\cdot \f{kq}{2^\ell}} = 2^n \cdot \1_{q = 0},
	\end{align*}
	by usual geometric sums of roots of unity. Thus, 
	\[
		\sum_k v_{k, \ell, r} v_{k, \ell, r}^\dag = 
			\begin{cases}
				2^n \cdot \1_{j = j'} & j, j' \in L_\ell^r,\\
				0 & \text{otherwise},
			\end{cases}
	\]
	It is clear that this matrix has norm exactly $2^n$ (unless $L_\ell^r$ is empty, when it is 0), concluding the
	proof.
\end{proof}

\subsection{Combining the pieces}

Applying Lemma \ref{lem:coupling} to $\cK = \cK_{n, \g}$ defined in \eqref{thickpointset}, we obtain the following corollary, which is the analog of
\cite[Lemma 2.6]{kk}.
\begin{lemma}
	\label{lem:coupling2}
	There is a coupling of $\Bra{\ao_k, \at_k}_{k \geq 1}$ and $\Bra{\go_k, \gt_k}_{k \geq 1}$ such that
	for any $n \geq 1$,
	\[
		\sup_{v \in \cKn} \Abs{X_a(v) - X_g(v)} \leq \poly(n) \cdot 2^{-\f{n (\g - \del)^2}{8}},
	\]
	with probability $\geq 1 - O(n^{-2})$.
\end{lemma}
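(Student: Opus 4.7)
I would build the coupling of all the $\{(\ao_k, \at_k, \go_k, \gt_k)\}_{k \geq 1}$ hierarchically, level by level. Having already coupled the variables with index $k < 2^{n-1}$, the sets $\cT_{n-1, \g, a}$ and $\cT_{n-1, \g, g}$, and hence $\cKn$, are measurable with respect to that past; one then conditions on $\cKn$ and applies Lemma \ref{lem:coupling} with $\cK = \cKn$ to couple the fresh level-$n$ variables $\{(\ao_k, \at_k, \go_k, \gt_k)\}_{k = 2^{n-1}}^{2^n-1}$. Because distinct levels use disjoint independent randomness, this assembles into a joint coupling of the full sequence, and the claim at level $n$ depends only on the first $n$ steps of this construction. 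The proof then reduces to (i) controlling the random cardinality $|\cKn|$, and (ii) invoking Lemma \ref{lem:coupling}.

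\textbf{Step (i): bounding $|\cKn|$.} For $v \in \cN_{n-1}$, decompose $\tS_a(v) = \sum_{\ell \leq n-1} X_a(w_\ell)$ as a sum of independent ancestral contributions, each of variance $\log 2 + O(2^{-\ell})$ by \eqref{eq:svar}. Taylor-expanding the log-MGF term by term, the exponential moment hypothesis \eqref{eq:acond} combined with the $k^{-1/2}$ weights makes the cubic remainder summable in $k$ (the $k$-th summand contributes $O(\lam^3 k^{-3/2})$), so a Chernoff bound at tilt $\lam = \g - \del$ yields the Gaussian-type estimate
\[
\P\Rnd{\tS_a(v) \geq (n-1)(\g-\del)\log 2} \leq C \cdot 2^{-(n-1)(\g-\del)^2 / 2}
\]
uniformly in $v$. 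Since $|\cN_{n-1}| \leq \poly(n) \cdot 2^{n-1}$ by definition of $\cS_n$ in \eqref{mesh23} and every level-$(n-1)$ vertex has at most $O(1)$ children at level $n$, summing over $v$ gives $\E|\cKa| \leq \poly(n) \cdot 2^{n(1 - (\g-\del)^2/2)}$, with an identical bound for $\cKg$. Markov's inequality then ensures
\[
|\cKn| \leq \poly(n) \cdot 2^{n(1 - (\g-\del)^2 / 2)}
\]
with probability at least $1 - O(n^{-2})$; in particular $|\cKn| \leq 2^n$ once $n$ is large, the finitely many small $n$ being absorbed into the polynomial on the right-hand side of the target bound.

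\textbf{Step (ii): applying the coupling lemma.} On the high-probability event from step (i), condition on the $\sigma$-algebra generated by the levels $< n$; then $\cKn$ is deterministic with $|\cKn| \leq 2^n$, and Lemma \ref{lem:coupling} applied with $\cK = \cKn$ produces a coupling of the fresh level-$n$ variables for which
\[
\sup_{v \in \cKn} |X_a(v) - X_g(v)| \leq \poly(n) \Rnd{\tfrac{|\cKn|}{2^n}}^{1/4} \leq \poly(n) \cdot 2^{-n(\g-\del)^2 / 8},
\]
with conditional probability at least $1 - O(2^{-n})$. A union bound over the two bad events (size too large, or coupling fails) yields the claim with overall probability $1 - O(n^{-2}) - O(2^{-n}) = 1 - O(n^{-2})$.

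\textbf{Main obstacle.} No single step is individually hard once the ingredients are in place; the substantive content hides in the joint requirement that (a) $|\cKn|$ be polynomially small compared to $2^n$ and (b) the CLT deliver an error bound that is a favorable power of $|\cKn|/2^n$. It is precisely the sharpened CLT of Theorem \ref{thm:main}, combined with the isotropy estimate $\norm{U} \les n^{10}$ from Lemma \ref{lem:normu}, that makes (b) useful throughout $\g \in (0, \sqrt{2})$, whereas the coarser, covariance-agnostic CLT used in \cite{kk} only sufficed when $(\g-\del)^2 / 2 > 1/2$, i.e.\ in the regime $\g \in (1, \sqrt 2)$.
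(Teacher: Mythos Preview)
Your proof is correct and follows essentially the same approach as the paper: first bound $\E|\cKn|$ via a Chernoff/first-moment argument (the paper cites \cite[Lemma 6]{junnila2020multiplicative} for the Gaussian-like MGF \eqref{laplace}, while you sketch its proof directly via the $O(\lam^3 k^{-3/2})$ summability of cubic remainders), use Markov to get the size bound with probability $1 - O(n^{-2})$, then condition on the past levels and apply Lemma \ref{lem:coupling} with $\cK = \cKn$, combining via a union bound. Your treatment of the inductive coupling construction is in fact slightly more explicit than the paper's, and your $O(1)$ bound on the branching factor is sharper than the paper's crude $\poly(n)$, but neither difference affects the argument.
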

In fact, as will be apparent, the failure probability can be made exponentially
small in $n$ by changing a few parameters but for our purposes only their
summability will be important.
\begin{proof}
	Let us first estimate the size of $\cKn$. Recalling \eqref{sumhier}, note that,
	\begin{align}
		\E\Abs\cKn  &\leq  \cdot \poly(n) \cdot \Abs{\cN_{n - 1}} \cdot \left[\sup_v \P(\tS_a(v) \geq (n - 1) \Rnd{(\g - \del) \log 2})\right. \\
		& \left.\quad \quad +\sup_v \P(\tS_g(v) \geq (n - 1) \Rnd{(\g - \del) \log 2}) \right], 
				 \label{eq:kn1}
	\end{align}
where the supremum is over $v \in \cN_{n - 1}$, and the $\poly(n)$ is a crude
	upper bound for the branching factor of the tree. In fact both the
	probability terms above will be essentially the same owing to the one
	dimensional central limit theorem. Formally, note that
	\[
		\tS_a(v) = \sum_{w \preceq v} X_a(w) = \sum_{k = 1}^{2^{n - 1} - 1} \f1{\sqrt{k}} \Rnd{\ao_k \cos(2\pi k t(w_k))
		+ \at_k \sin(2\pi k t(w_k))},
	\]
	where $w_k$ is the depth $\ell$ ancestor of $v$ with $\ell$ defined via $2^{\ell - 1} \leq k < 2^\ell$ (in
	particular, $v$ is the depth $n - 1$-ancestor of itself since it is at
	level $n - 1$).  The following precise expression of the Laplace transform
	was obtained in \cite[Lemma 6]{junnila2020multiplicative} 
	\begin{equation}\label{laplace}
		\E \Exp{(\g - \del) \tS_a(v)} = \Exp{\f{(\g - \del)^2}{2} \E[\tS_a(v)^2] + O_\g(1)},
	\end{equation}
	 where $O_\g(1)$ denotes a term bounded
	as a function of $\g$ (and not $n$). This establishes a form of the one
	dimensional CLT which will suffice to establish that the probability of $v$
	being a thick point is at an exponential scale the same as in the Gaussian
	case. Note that in the latter, one does not have the $O(1)$ correction in
	\eqref{laplace}.
	
	Using a variance computation (as in \eqref{eq:svar}), we obtain that
	\[
		\E[\tS_a(v)^2] = (n - 1)\log 2 + O(1),
	\]
	and thus
	\[
		\E \Exp{(\g - \del) \tS_a(v)} = \Exp{\f{(\g - \del)^2}{2} (n - 1)\log 2 + O_\g(1)}.
	\]
	The probability that $v$ is $(\g - \del)$-thick is then, by Markov's inequality,
	\begin{align*}
		\P\Rnd{\tS_a(v) \geq (n - 1) \Rnd{(\g - \del) \log 2}} &= \P\Rnd{\Exp{(\g - \del) \tS_a(v)} \geq \Exp{(\g -
	\del)^2 (n - 1) \log 2}}\\
															   &\les \Exp{(n - 1)\log 2 \Rnd{\f{(\g - \del)^2}{2} - (\g - \del)^2}}
		\\
															   &= 2^{-(n - 1)\f{(\g - \del)^2}{2}}.
	\end{align*}
	The Gaussian case has the same proof from \eqref{laplace} onwards.
	Substituting this into \eqref{eq:kn1} we obtain
	\begin{equation}
		\label{eq:szk}
		\E\Abs\cKn \les \poly(n) \cdot 2^{n \Rnd{1 - \f{(\g - \del)^2}{2}}},\
	\end{equation}
	and thus, again by Markov's inequality, we have that
	\[
		\P\Rnd{\Abs\cKn \geq \poly(n) \cdot 2^{n \Rnd{1 - \f{(\g - \del)^2}{2}}}} \leq n^{-2}, \quad n \geq 1,
	\]
	where $\poly(n)$ refers to a fixed polynomial in $n$.  Call this event $E$.  Note that $E$ does not reveal the
	noise determining $X_a(v)$ and $X_g(v)$, and thus we may condition on it and apply Lemma \ref{lem:coupling} to
	obtain
	\[
		\sup_{v \in \cKn} \Abs{X_a(v) - X_g(v)} \les \poly(n) \F{\Abs{\cKn}}{2^n}^{1/4} \les \poly(n) \cdot 2^{-\f{n (\g -
		\del)^2}{8}}
	\]
	with probability $\geq 1 - O(2^{-n})$. Therefore, the unconditional probability is at least $(1 - O(2^{-n}))(1 -
	\P(E)) \geq 1 - O(n^{-2})$, finishing the proof.
\end{proof}

\begin{remark}
	Note that the bound in Lemma \ref{lem:coupling2} is effective as soon as $\g > 0$ (for sufficiently small
	$\del$), while the corresponding bound in \cite[Lemma 2.6]{kk} is effective only when $\g > 1$. This is the key
	improvement which allows our result to extend to the entire range of $\g \in (0, \sqrt 2)$. The main reason for
	the improvement is that Theorem \ref{thm:main} quantifies the dependence of the coupling error on the ``isotropy'' of the
	vectors, as encoded by $\norm{U}$. Observe that, in the notation for Lemma \ref{lem:coupling}, the trivial bound
	on $\norm{U}$ is $\tr(U)$ which is $O(\Abs\cK)$, since the individual vectors have norms on the order of
	$\sqrt{\Abs\cK}$. This bound would have led to a coupling error of $\F{\Abs{\cK}^2}{2^n}^{1/4}$
	(up to $\poly(n)$ factors). This recovers the results of \cite{kk}, since the bound is small only when $\cK \ll
	2^{n/2}$, something that happens for $\cKn$ only when $\g > 1$. The
	explicit CLT result used in \cite[Lemma 2.4]{kk} which goes back to
	\cite{y} was covariance agnostic. Thus, explicitly pinning down the role of
	$U$ in the coupling
	error in Theorem \ref{thm:main}, and exploiting the DFT structure of $U$ to obtain a $\poly(n)$ bound on
	$\norm{U}$ in Lemma \ref{lem:normu}, are crucial in obtaining a bound that is effective for all $\g > 0$.
\end{remark}

At this point, only the last missing piece in the proof of Lemma
\ref{lem:hier2} outlined before Section \ref{hierarchical1} remains. This part
is verbatim \cite[Section 3]{kk}. We
discuss the broad steps here, and refer the reader to \cite{kk} for more details.

Define for each $t \in [0, 1]$,
\begin{align*}
	\tR(t) &\df \frac{\Exp{\g \tS_g(v)} \big/ \tZg}{\Exp{\g \tS_a(v)} \big/ \tZa},
\end{align*}
using the notation from \eqref{eq:mutdef} and \eqref{eq:sdef2}. Note that a natural candidate for the Radon-Nikodym
derivative $\d\mutg/\d\muta$ is the limit of $\tR$ as $n \to \infty$. The proof for this claim is divided into three steps: Almost surely (in the noise variables, i.e, $\P$-a.s.),
{\bf (A) Convergence.} $\tR(t)$ converges to  a value in $(0, \infty)$ for $\muta$-a.e. $t$, as well as for
$\mutg$-a.e. $t$.

{\bf (B) Density behavior.} For all Borel $A \sse [0, 1]$, we have
\begin{align*}
	\lim_{n \to \infty} \int_{A} \tR(t) \muta(\d t) &= \mutg(A), \\
	\lim_{n \to \infty} \int_{A} \tR(t)^{-1} \mutg(\d t) &= \muta(A).
\end{align*}

{\bf (C) Uniform integrability.} The family $\{\tR(\cdot)\}_{n \geq 1}$ is uniformly integrable with respect to $\muta$ and
$\{\tR(\cdot)^{-1}\}_{n \geq 1}$ is uniformly integrable with respect to $\mutg$.

The proof of Lemma \ref{lem:hier2}
given the above steps follows from standard analysis arguments. Step A yields a limiting
Radon-Nikodym derivative, say $\tRi$, and the convergence is in $L^1$ due to uniform integrability (Step C).
Further, uniform integrability allows limits to commute with integration in Step B, yielding a valid limiting derivative. 

Establishing the claims in steps A-C is technically more involved and we refer to \cite[Section 3]{kk} for the details.

We end by discussing higher dimensional generalizations which was also touched upon in \cite{kk}. 

\subsection{Higher dimensional multiplicative chaos}\label{gdom} 

While the main object in the paper was a log-correlated process on the unit interval,
as was already observed in \cite{kk}, the results have natural higher dimensional analogues. For simplicity, we
consider the unit cubes. For $D=[0,1]^d,$ let
$\lambda_k$ be the $k
^{th}$ smallest
eigenvalue (with multiplicity) of $-\Delta$ (the Laplacian) in $D$ with Dirichlet boundary conditions, and let $h_k$ denote the
corresponding eigenfunction, normalized so that $\|h_k\|_{L^2(D)}=1$. Given this we can consider the following series analogous to \eqref{fourier1}, given by
\begin{equation}\label{highdim24}
S_{n, g}(x) \df \frac{1}{(2\pi)^d}\sum_{k = 1}^n g_k \lambda_k^{-\frac{d}{4}} h_k(x), \quad x \in D,
\end{equation}
where $g = (g_k)_{k\in \N}$ denote a sequence of i.i.d. standard Gaussian
random variables. One can similarly define $S_{n, a}(x)$ for a generic i.i.d.
sequence of mean zero, variance one random variables $a = (a_k)_{k\in \N}.$ The
$S_{n, g}(\cdot)$ process was considered in \cite{log} termed as the
log-correlated Gaussian process on $D$. The associated multiplicative chaos
measure is then $\mu_{\gamma,g}$. The analogous $a$ counterparts make sense
following \cite{junnila2020multiplicative} (this is because we are restricting
to the case of the cubes and for more general domains one needs certain
regularity properties of the eigenfunctions $h_k$). The measures are
non-degenerate now whenever $\gamma \in (0,\sqrt{2d})$ with the $L^2$ regime
being $\gamma \in (0,\sqrt{d}).$ The arguments in this paper generalize
verbatim to prove that $a$ and $g$ can be coupled so that $\mu_{\gamma,g}$ and
$\mu_{\gamma,a}$ are mutually absolutely continuous to each other under the
coupling for all $\gamma\in (0,\sqrt{2d})$ while as in the $d=1$ case, the
arguments in \cite{kk} cover the regime $(\sqrt d, \sqrt{2d})$.

Analogous to \eqref{block1}, one may now consider the blocks  
\begin{align}
	\label{block2}
	X_a(x) &= \sum_{\lambda_k \in (2^{2(n - 1)}, 2^{2n}]} \lambda_k^{-d/4} a_k h_k(x), \\
	X_g(x) &= \sum_{\lambda_k \in (2^{2(n - 1)}, 2^{2n}]} \lambda_k^{-d/4} g_k h_k(x),
\end{align}

The arguments from here on are particularly simplified by the fact that the eigenfunctions $h_k$ are the tensor
product of $d$ one dimensional eigenfunctions. That is, $$h_k(x_1,x_2,\ldots,x_d)=f_1(x_1)\otimes f_2(x_2)
\otimes\cdots \otimes f_d(x_d)$$ where for any $1\le i\le d$, $f_i(x_i)=\sin(2\pi k_ix_i) \text{ or }\cos(2\pi k_i x_i),$
for some non-negative integers $k_i$ with the corresponding $\lambda_k=\sum_{i=1}^d k_i^2.$ It follows that
$\lambda_{k} \approx k^{\frac{2}{d}}$ and hence the above sums have about $2^{nd}$ terms in them. 
{
Thus, we can follow the one dimensional strategy and define the hierarchical model with a mesh which is a $d$-fold
tensor product of the one dimensional mesh constructed in \eqref{mesh23} allowing to apply the CLT result  with vectors
$w_i$ now admitting again a tensor structure $v_1\otimes v_2 \otimes\cdots \otimes v_d$ where $v_j$ are the vectors
appearing in \eqref{vec234}. The only remaining thing worth pointing out then is that the corresponding matrix
$\displaystyle{W=\frac{1}{2^{nd}}\sum_{k: \lambda_k \in (2^{2(n - 1)}, 2^{2n}]}h_kh_k^T}$ is then in a
positive definite sense upper bounded by the matrix 
$$
\widetilde W = \underbrace{U\otimes U \otimes \cdots \otimes U}_{d \text{ times}}
$$
where $U=\frac{1}{2^n}\sum_{j=1}^{2^n}\big[u_{j}u_j^T+v_{j}v_j^T\big]$ where $u_j$ and $v_j$ are given by the restrictions of
$\sin(2\pi j x)$ and $\cos(2\pi j x)$ on the $2^n n^4$ mesh points. Eigenvalues of $\widetilde W$ are now products of
eigenvalues of $U$ hence implying that $\|\widetilde W\|=\|U\|^d$. Since $\|U\|$ is sub-polynomial in $n$ by the arguments
as in Lemma \ref{lem:normu}, so is $\|W\|.$

}

\section{Appendix}

We finish with the outstanding proofs of some of the statements which have featured in our arguments.

\begin{proof}[Proof of Lemma \ref{lem:diagonalization}]
	The result is clear if $Q_t$ is always full rank, since via 
	Levy's characterization (\cite[Theorem 3.16]{shreve}) that a $d$-dimensional Brownian
	motion is the only continuous martingale with quadratic variation $It$, one may check that $Q_t^{-1} \d X_t$
	is a Brownian motion. However, the invertibility fails in general, so we start by considering the ``completion''
	$W_t$ defined via
	\[
		\d W_t = Q_t^+ \d X_t + \sqrt{I - Q_t^+ Q_t} \d B'_t, \quad Y_0 = 0,
	\]
	for an independent Brownian motion $B'_t$, where $Q_t^+$ is the pseudo-inverse of $Q_t$. Using Levy's characterization
	it is easy to check that $W_t$ is a Brownian motion since
	\begin{align*}
		\d [W, W]_t &= Q_t^+ \d [X, X]_t \Rnd{Q_t^+}^T + (I - Q_t^+ Q_t) \d t \\
				  &\overset{(a)}{=}  Q_t^+ Q^2_t (Q_t^+)^T \d t + (I - Q_t^+ Q_t) \d t \\
				  &= I \d t.
	\end{align*}
	Step $(a)$ above invoked the fact that the quadratic variation of $X$ is
	\[
		\d [X, X]_t = A_t A_t^T \d t = Q_t^2 \d t.
	\]
	To finish the proof, we will show that $dX_t = Q_t \d W_t$ by showing that the difference process
	$Y_t$ satisfying $\d Y_t = \d X_t - Q_t \d W_t$ has zero quadratic variation (implying that $Y_t \equiv 0$, 
	since it is also a continuous martingale, and starts at zero). Note that
	\begin{align*}
		\d Y_t = \d X_t - Q_t \d W_t &= \d X_t - Q_t \Rnd{Q_t^+ \d X_t + \sqrt{I - Q_t^+ Q_t} \d B'_t}  \\
						&= (I - Q_t Q_t^+) A_t \d B_t - \Rnd{Q_t \sqrt{I - Q_t^+ Q_t}} \d B'_t
	\end{align*}
	and thus the quadratic variation is
	\begin{align*}
		\d [Y, Y]_t = \Box{ (I - Q_t Q_t^+) A_t A_t^T (I - Q_t^+ Q_t) + Q_t (I - Q^+_t Q_t) Q_t }  \d t.
	\end{align*}
	Direct computations show that both terms above are zero, which finishes the proof. 
	To see this for the first term, recall that $Q_t^2 = A_t A_t^T$, implying
	\begin{align*}
		A_t A_t^T (I - Q_t^+ Q_t) = Q_t^2 - Q_t^2 Q_t^+ Q_t = 0
	\end{align*}
	since $MM^+ M = M$ for any positive semi-definite matrix $A$. For the second term, we have
	\begin{align*}
		Q_t (I - Q_t^+ Q_t) Q_t = Q_t^2 - Q_t Q_t^+ Q_t^2 = 0,
	\end{align*}
	again for the same reasons.
\end{proof}

\begin{lemma}[Square-root is monotone]
	\label{lem:sqrtmonotone}
	If $P \preceq Q$ for two symmetric $d \times d$
	positive semi-definite matrices (with $\preceq$ denoting the positive semi-definite order),
	then $\sqrt{P} \preceq \sqrt{Q}$.
\end{lemma}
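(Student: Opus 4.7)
My plan is to exploit the classical integral representation of the square-root function on $[0, \infty)$,
\[
\sqrt{x} = \frac{1}{\pi}\int_0^\infty \sqrt{t}\left(\frac{1}{t} - \frac{1}{t + x}\right) dt, \quad x \geq 0,
\]
verified by the substitution $t = xu^2$ together with $\int_0^\infty du/(1+u^2) = \pi/2$. Diagonalizing any PSD matrix $A$ and applying this identity eigenvalue-by-eigenvalue yields the operator version
\[
\sqrt{A} = \frac{1}{\pi}\int_0^\infty \sqrt{t}\left(\frac{1}{t}I - (A + t I)^{-1}\right) dt.
\]
The integrand is PSD for each $t > 0$ since $A + tI \succeq tI$ forces $(A+tI)^{-1} \preceq t^{-1}I$, and the integral converges at both endpoints (the integrand behaves like $\sqrt{t}$ near $0$ and like $t^{-3/2}$ at $\infty$ in operator norm).

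The second ingredient I will use is the antitone property of inversion on the positive definite cone: if $0 \prec M \preceq N$ then $N^{-1} \preceq M^{-1}$. This follows by rewriting $M \preceq N$ as $N^{-1/2} M N^{-1/2} \preceq I$, inverting to get $N^{1/2} M^{-1} N^{1/2} \succeq I$, and conjugating back by $N^{-1/2}$. Applied to the positive-definite shifts $P + tI \preceq Q + tI$ for every $t > 0$, this yields
\[
(Q + tI)^{-1} \preceq (P + tI)^{-1}.
\]
Multiplying through by $-\sqrt{t} \leq 0$, adding $t^{-1/2} I$ to both sides, and integrating in $t \in (0, \infty)$ preserves the PSD order, because for each fixed $v \in \R^d$ the map $A \mapsto v^T A v$ is a linear, order-preserving functional. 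Plugging the resulting inequality into the integral representation above gives $\sqrt{P} \preceq \sqrt{Q}$, which is the desired conclusion.

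The main (and essentially only) technical point is justifying the interchange of PSD order with integration, which reduces to integrating the scalar inequality $v^T(\cdot)v \geq 0$ for each fixed $v \in \R^d$ and then invoking monotone convergence. An alternative, ostensibly more elementary approach would be to approximate $\sqrt{\cdot}$ by operator-monotone polynomials, but producing such polynomials is essentially the content of L\"owner's theorem on operator-monotone functions and offers no real economy. The integral representation route, by contrast, relies only on the antitonicity of the inverse map, which is at the level of the preceding lemmas in the paper and fits smoothly into the perturbation-theoretic setup of Section \ref{psdsqrt}.
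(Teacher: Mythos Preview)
Your argument is correct but follows a different route from the paper. The paper instead proves the equivalent statement ``$P^2 \preceq Q^2 \Rightarrow P \preceq Q$'' by a purely linear-algebraic computation: after reducing to invertible $P$, it observes that $P \preceq Q$ is equivalent to $\lambda_{\min}(QP^{-1}) \geq 1$ (via conjugation by $P^{-1/2}$ and similarity), and that $P^2 \preceq Q^2$ is equivalent to $\lambda_{\min}\bigl((QP^{-1})^T(QP^{-1})\bigr) \geq 1$; the implication then follows because the singular values of $QP^{-1}$ dominate its (nonnegative) eigenvalues. No integral representation or antitonicity of inversion is invoked.

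Your approach, via the integral formula $\sqrt{A} = \pi^{-1}\int_0^\infty \sqrt{t}\bigl(t^{-1}I - (A+tI)^{-1}\bigr)\,dt$ together with the order-reversing property of the inverse, is the standard operator-theoretic proof and has the advantage of extending immediately to any function admitting a Pick--Nevanlinna representation (this is essentially L\"owner's theorem specialized to $\sqrt{\cdot}$). The paper's proof is more elementary in the sense that it avoids any measure-theoretic step and stays entirely within finite-dimensional spectral reasoning; on the other hand, it is tailored to the square root and does not generalize as transparently. Both proofs are short and self-contained, and neither is circular: your antitonicity step uses only the existence of $N^{1/2}$, not its monotonicity.
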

\begin{proof}
	It is cleaner (and equivalent) to show that if $P^2 \preceq Q^2$ then $P \preceq Q$.	Further,
	by a perturbation argument, one may reduce to the case when $P$ is strictly positive definite.

	For any vector $v \in \R^d$, we have
	\[
		v^T (Q - P) v = v^T P^{\f12} (P^{-\f12} Q P^{-\f12} - I) P^{\f12} v 
	\]
	so that $P \preceq Q \iff I \preceq P^{-\f12} Q P^{-\f12}$ (when $P$ is invertible).
	In terms of eigenvalues, this is the same as $\lm(P^{-\f12} Q P^{-\f12}) \geq 1$, which, by similarity,
	is equivalent to $\lm(QP^{-1}) \geq 1$.

	Further, by the positive semi-definiteness of $Q$, we have that
	\[
		v^T P^{-\f12} Q P^{-\f12} v = (P^{-\f12} v)^T Q (P^{-\f12} v) \geq 0,
	\]
	so that $P^{-\f12} Q P^{-\f12}$ is positive semi-definite, implying that $\lm(QP^{-1}) \geq 0$.

	By exactly the same reasoning, the provided condition $P^2 \preceq Q^2$ is equivalent to
	$\lm(P^{-1} Q^2 P^{-1}) \geq 1$. But
	\[
		P^{-1} Q^2 P^{-1} = (QP^{-1})^T (QP^{-1}),
	\]
	so $\lm(P^{-1} Q^2 P^{-1}) \geq 1 \implies \lm(QP^{-1}) \geq 1$, since \emph{a priori}, we have
	that $\lm(QP^{-1}) \geq 0$. This can be seen by considering the quadratic
	form of $v$, the eigenvector of $QP^{-1}$ corresponding to $\lm(QP^{-1})$
	(the existence of which is guaranteed by the similarity of $QP^{-1}$ to the
	symmetric matrix $P^{-\f12} Q P^{-\f12}$) with respect to $ (QP^{-1})^T
	(QP^{-1}).$ This concludes the proof.
\end{proof}

The following simple result contains a computation that allows us to derive effective bounds from Bernstein's inequality.

\begin{lemma}
	\label{lem:tailsimp}
	Suppose a random variable $X \geq 0$ satisfies
	\[
		\P(X \geq x) \leq C \cdot \Exp{-\f{c x^2}{a + bx}}, \quad x \geq 0,
	\]
	for constants $a, b, c, C> 0$ with $\log C \geq 1$.  Then, for all $m > 0$,
	\[
		\P(X \geq x_m) \leq e^{-m},
	\]
	where
	\begin{align*}
		x_m &\df \max(x_{1, m}, x_{2, m}) \\
		x_{1, m} &\df \sqrt{\f{2a}{c}} \Rnd{\sqrt{\log C} + m} \\
		x_{2, m} &\df \f{2b}{c} \Rnd{\log C + m} .
	\end{align*}
\end{lemma}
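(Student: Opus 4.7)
The plan is to reduce the probability estimate to the purely algebraic inequality
\[
\frac{c x_m^2}{a + b x_m} \;\geq\; m + \log C.
\]
Substituting this into the hypothesis $\P(X \geq x_m) \leq C \exp(-c x_m^2/(a + b x_m))$ then immediately yields $\P(X \geq x_m) \leq C \exp(-(m + \log C)) = e^{-m}$, which is the conclusion.

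To establish the algebraic inequality, I will split into two regimes depending on which of the two terms in the denominator is the larger contribution. In the regime $a \geq b x_m$, the denominator is bounded above by $2a$, reducing the ratio to at least $c x_m^2/(2a)$; plugging in the lower bound $x_m \geq x_{1, m} = \sqrt{2a/c}\,(\sqrt{\log C} + m)$ turns this into $(\sqrt{\log C} + m)^2$. Expanding the square produces $\log C + 2m \sqrt{\log C} + m^2$, and the mild hypothesis $\log C \geq 1$ forces $2m \sqrt{\log C} \geq 2m \geq m$, so the expression is at least $\log C + m$ as required.

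In the complementary regime $b x_m > a$, the denominator is at most $2 b x_m$, so the ratio simplifies to at least $c x_m /(2b)$; the lower bound $x_m \geq x_{2, m} = (2b/c)(\log C + m)$ then delivers exactly $\log C + m$.

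There is no real obstacle here: the argument is a routine two-case analysis of the Bernstein-type denominator, the first case corresponding to the sub-Gaussian regime in $x$ and the second to the sub-exponential one. The only place the hypothesis $\log C \geq 1$ intervenes is to absorb the linear cross term when expanding $(\sqrt{\log C} + m)^2$ in the first case; without it, one would need to replace $\sqrt{\log C} + m$ by the slightly larger $\sqrt{\log C + m}$ in the definition of $x_{1, m}$, which would prevent the clean additive separation of the $\log C$ and $m$ contributions.
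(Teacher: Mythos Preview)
Your proof is correct and essentially identical to the paper's own argument: both use the elementary bound $a + bx \leq 2\max(a, bx)$ to reduce to the two sub-Gaussian/sub-exponential branches, then plug in $x_m \geq x_{1,m}$ and $x_m \geq x_{2,m}$ respectively, invoking $\log C \geq 1$ to handle the cross term in $(\sqrt{\log C} + m)^2$. The only cosmetic difference is that the paper phrases the split via $\min\big(\tfrac{cx^2}{2a}, \tfrac{cx}{2b}\big)$ rather than an explicit case analysis on whether $a \geq bx_m$.
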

\begin{proof}
	To start, note that $a + bx \leq 2 \cdot \max(a, bx)$ implying
	\[
		\f{cx^2}{a + bx} \geq \min\Rnd{\f{cx^2}{2a}, \f{cx}{2b}}.
	\]
	If $x \geq \sqrt{2a/c} \Rnd{\sqrt{\log C} + m} = x_{1,m}$, then
	\[
		\f{cx^2}{2a} \geq \Rnd{\sqrt{\log C} + t}^2 \geq \log C + m,
	\]
	since $\log C \geq 1$. If $x \geq (2b/c) \Rnd{\log C + m} = x_{2,m}$, then
	\[
		\f{cx}{2b} \geq \log C + m.
	\]
	Thus, if $x \geq x_m = \max(x_{1, m}, x_{2, m})$, then
	\[
		\P(X \geq x_m) \leq C \cdot \Exp{-\f{c x^2}{a + bx}} \leq C \cdot \Exp{-(\log C + m)} \leq e^{-m},
	\]
	as required.
\end{proof}

We end this section with the brief proof of Lemma \ref{lem:quadvartail}.
\begin{proof}[Proof of Lemma \ref{lem:quadvartail}]
	The proof follows from a direct application of the Dubins-Schwarz theorem \cite[Theorem 4.6]{shreve}, which states that for any continuous
	martingale $X_t$, there is a Brownian motion $W$ (on a potentially extended space) such that $X_t = W_{[X]_t}$. Let $A$ be the event
	that $[X]_\infty \leq M$. On $A$, 
	\[
		\sup_t |X_t| \leq \sup_{s \leq M} |W_s|.
	\]
	Standard Brownian estimates imply
	\[
		\P\Rnd{\sup_{s \leq M} |W_s| > x} \les \P(|W_M| > x) \les \Exp{-x^2 / 2M}.
	\]
	Consequently, 
	\[
		\P\Rnd{\sup_t |X_t| > \sqrt{2M \log(C/\eps)}} \leq \P(A^c) + \eps \leq 2\eps,
	\]
	for a universal constant $C$.
\end{proof}

\appendix

\bibliographystyle{alpha}
\bibliography{ref}

\end{document}